\definecolor{green-url}{RGB}{0,128,0}
\definecolor{blue-url}{RGB}{0,0,205}
\definecolor{red-url}{RGB}{227,0,34}
\definecolor{ashgrey}{rgb}{0.7, 0.75, 0.71}
\newcommand{\defit}[1]{{\textsf{#1}}}
\renewcommand{\epsilon}{\varepsilon}
\renewcommand{\phi}{\varphi}
\theoremstyle{plain}
\newtheorem{theorem}{Theorem}[section]
\newtheorem{proposition}[theorem]{Proposition}
\newtheorem{lemma}[theorem]{Lemma}
\newtheorem{corollary}[theorem]{Corollary}
\newtheorem{problem}[theorem]{Problem}
\theoremstyle{definition}
\newtheorem{example}[theorem]{Example}
\newtheorem{definition}[theorem]{Definition}
\theoremstyle{remark}
\newtheorem{remark}[theorem]{Remark}
\newtheorem{remarks}[theorem]{Remarks}
\newcommand{\N}{\mathbb N}
\newcommand{\Z}{\mathbb Z}
\newcommand{\vv}{\mathsf{v}}
\DeclareMathOperator{\spec}{spec}
\newcommand{\DP}{\negthinspace : \negthinspace}
\setlist[enumerate,1]{itemsep=0.05cm, label=\textup{(}\normalfont\arabic*\textup{)}}
\setlist[enumerate,2]{itemsep=0.05cm, label=\textup{(}\normalfont\roman*\textup{)}}
\setlist[itemize,1]{itemsep=0.05cm}
\numberwithin{equation}{section}
\title[On Transfer Homomorphisms in Commutative Rings with Zero-divisors]{On Transfer Homomorphisms \\ in Commutative Rings with Zero-divisors}
\author{Aqsa Bashir}
\email{aqsa.bashir@uni-graz.at}
\address{University of Graz, Department of Mathematics and Scientific Computing, NAWI Graz, Heinrichstrasse 36, 8010 Graz, Austria }
\author{Mara Pompili}
\email{mara.pompili@uni-graz.at}
\address{University of Graz, Department of Mathematics and Scientific Computing, NAWI Graz, Heinrichstrasse 36, 8010 Graz, Austria }
\thanks{This research was funded in whole, or in part, by the Austrian Science Fund (FWF) [10.55776/P36852]. For the purpose of open access, the author has applied a CC BY public copyright license to any Author Accepted Manuscript version arising from this submission }
\keywords{Transfer homomorphisms, Krull rings, pullback constructions.}
\subjclass{13F05, 13A15.}
\begin{document}

\begin{abstract}
    We study the arithmetic of monoids of regular elements  of commutative rings with zero-divisors. Our focus is on Krull rings and on some of their generalizations (such as weakly Krull rings and C-rings). We establish sufficient conditions for a subring $R$ of a Krull ring $D$ guaranteeing that the inclusion $R^{\bullet} \hookrightarrow D^{\bullet}$ of the respective monoids of regular elements is a transfer homomorphism. The arithmetic of the Krull monoid $D^{\bullet}$ is well studied and the existence of a transfer homomorphism implies that $R^{\bullet}$ and $D^{\bullet}$ share many arithmetic properties.
\end{abstract}

\maketitle


\section{Introduction} \label{1}
For a long time factorization theory of commutative rings had its focus on integral domains. Arithmetic studies in commutative rings with zero-divisors started much later and there are two main approaches. One direction, initiated by Daniel D. Anderson \cite{AM85, AVL96}, considers the full ring and deals with factorizations of zero-divisors. A large variety of results has been achieved so far (for a sample, see \cite{AJ17,JM22, JMN21}), but the arithmetic properties under consideration are less fine than those studied for integral domains. 

The other direction (for a sample, see \cite{CO22, GRR15, HZ25}) restricts its attention to the monoid of regular elements of the ring. Such a monoid is a commutative cancellative semigroup with identity. There is a well-developed factorization theory for this class of monoids  and, inside this class, Krull monoids are most investigated. Their arithmetic can be studied, among others,  with methods from additive combinatorics and there is an abundance of results on their arithmetic (we can only refer to some recent results and surveys, \cite{Gri22, Sch16}). A crucial strategy in factorization theory makes use of transfer homomorphisms. In order to study a monoid $H$ (of a given class of monoids) one constructs a transfer homomorphism $\theta \colon H \to B$, where $B$ is simpler (in some aspects), and $\theta$ pulls back the arithmetic results achieved for $B$ to the original monoid of interest $H$. The classic example of a transfer homomorphism is the epimorphism from a Krull monoid to a monoid of zero-sum sequences over the class group of the Krull monoid. 

In the last decade, a pivotal result by Smertnig \cite{Sme13} initiated investigations which non-Krull monoids and domains allow a transfer homomorphism to a Krull monoid and subsequently to a monoid of zero-sum sequences (for an overview, we refer to \cite{BR22, GZ20} and, for some recent progress,  to \cite{Rag25}). The present paper is in the vein of these investigations. We start with a general transfer result (Theorem \ref{prop2}). For a given commutative ring $D$, we establish sufficient conditions for a subring $R \subseteq D$ implying that the inclusion $R^{\bullet} \hookrightarrow D^{\bullet}$ of the respective monoids of regular elements is a transfer homomorphism. We then apply this result (among others) to the case when $D$ is a Krull ring (Theorem \ref{thm: transferkrull}). If $D$ is a Krull ring, then $D^{\bullet}$ is a Krull monoid and the existence of a transfer homomorphism $R^{\bullet} \hookrightarrow D^{\bullet}$ implies that all the well investigated properties of the Krull monoid $D^{\bullet}$ hold true also for the monoid of regular elements of the subring $R \subseteq D$.

In Section \ref{2}, we gather the required background on the arithmetic of monoids, on the ideal theory of commutative rings with zero-divisors, and on transfer homomorphisms. In Section \ref{3}, we establish a result on the existence of a transfer homomorphism in a general setting, which we then, in Section \ref{4},  apply to Krull rings and C-rings. In Section \ref{5} we study the arithmetic of weakly Krull rings and weakly Krull monoids in the setting of $t$-Marot rings.
\section{Prerequisites} \label{2}
\smallskip

\subsection{Monoids} By a monoid, we mean a commutative, cancellative semigroup with identity. Let $H$ be a monoid. We denote by $\mathbf{q}(H)$ the quotient group of $H$, and by $H^\times$ the group of units. The \defit{reduced monoid} is the monoid given by $\{aH^\times\mid a\in H\}$. We say that two elements $a,b\in H$ are \defit{associated}  if $a=ub$ for some $u\in H^\times$, and we write $a\simeq b$. We say that $a$ divides $b$ in $H$ if $bH\subseteq aH,$ and we write $a|_Hb$. An $s$-\defit{ideal} of $H$ is a subset $\mathfrak{a}\subseteq H$ such that $\mathfrak{a}H=\mathfrak{a}.$ We say that an s-ideal $\mathfrak{a}$ is a \defit{prime s-ideal} if for every $a,b\in H$ with $ab\in \mathfrak{a}$ we have $a\in \mathfrak{a}$ or $b\in \mathfrak{a}.$ We will denote by $s$-$\spec(H)$ the set of prime s-ideals of $H$. For sets $X,Y\in \mathbf{q}(H)$, set $(X\DP Y)=\{a\in \mathbf{q}(H)\mid aY\subseteq X\},$ and $X_{v_H}:=X_v:=(H\DP (H\DP X)).$  We say that $X$ is a \defit{fractional} $v$-\defit{ideal} of $H$ if $cX\subseteq H$ for some $c\in H$ and $X_v=X$, and we say that $X$ is a $v$-\defit{ideal} if $X\subseteq H$ and $X_v=X.$ We denote by $v$-$\spec(H)$ the set of prime $v$-ideals of $H$, by $\mathfrak{X}(H)$ the set of minimal prime $v$-ideals, by $(\mathcal{F}_v(H),\cdot_v)$ the semigroup of fractional $v$-ideals of $H$ with the $v$-multiplication, and by $\mathcal{I}_v(H)$ the subsemigroup of $v$-ideals of $H$. Finally, we say that the monoid $H$ is \defit{completely integrally closed} if $H$ coincides with its \defit{complete integral closure} $\widehat{H}:=\{x\in\mathbf{q}(H)\mid \text{ there exists } c\in H : cx^n\in H, \text{ for all } n\in \mathbb{N}\}$.

\subsection{Rings} By a ring, we mean a non-zero commutative ring with unit element. Let $R$ be a ring with total quotient ring $\mathsf T(R)$. We denote by $R^{\times}$ the group of units of $R$, by $\mathsf Z(R)$ the set of zero-divisors (note that $0 \in \mathsf Z(R)$), by $R^{\bullet} =R \setminus \mathsf Z(R)$ the set of regular elements of $R$.
A subset $X \subseteq \mathsf T(R)$ is said to be \defit{regular} if $X^{\bullet}:=X \setminus \mathsf Z(\mathsf T(R)) \neq \emptyset$. Thus an ideal $I$ in $R$ is called a \defit{regular ideal} if it contains a regular element. Note that $R^{\bullet}$ is a monoid and $\mathsf T(R)^{\times}= \mathbf{q}(R^{\bullet})=\mathsf T(R)^{\bullet}$.

We denote by $\spec(R)$, resp. $\spec_r(R)$ the set of all prime ideals, resp. the set of all regular prime ideals. Moreover, we denote by $\max(R)$, resp. $\max_r(R)$ the set of all maximal ideals, resp. the set of all regular maximal ideals.  
Let $P$ be a regular prime ideal of $R$, and let $\mathrm{ht}P$ denote the \defit{height} of $P$ and $\dim (R)$ denote the dimension of $R$. The \defit{regular height} of $P$ is defined by $\mathrm{reg}$-$\mathrm{ht} P=\sup \{n\mid P_1\subsetneq\cdots\subsetneq P_n=P,\, P_i\in \spec_r(R) \text{ for all $i \in [1, n]$}\}.$ Then the \defit{regular dimension} of $R$ is defined by \[\mathrm{reg}\text{-}\dim(R)=\sup\{\mathrm{reg}\text{-}\mathrm{ht} P\mid P\in \spec_r(R)\}. \] Thus $\mathrm{reg}$-$\mathrm{ht} P \le \mathrm{ht}P$, $\mathrm{reg}$-$\dim(R)\le \dim (R)$. Denote by $\mathfrak{X}_r(R)$ the set of all regular prime ideals $P\in \spec(R)$ such that $\mathrm{reg}$-$\mathrm{ht}P=1$.

Let $S$ be a multiplicatively closed set of $R$. We consider the following localization of $R$ with respect to $S,$ \begin{itemize}
    \item $R_S:=\{\frac{a}{b}\mid a\in R,\text{ and } s\in S \},$
    \item $R_{(S)}:=\{\frac{a}{s}\mid a\in R, \, \text{and} \, s\in S^\bullet\},$
    \item $R_{[S]}:=\{z\in \mathsf T(R)\mid zs\in R\, \text{for some} \, s\in S\}.$ 
\end{itemize}
Clearly, $R_{(S)}\subseteq R_{[S]}\subseteq \mathsf T(R)$, and if $S\subseteq R^\bullet,$ then $R_{(S)}= R_{S}$. If $P\in \spec (R),$ then we set $R_{(P)}:=R_{(R\setminus P)}$ and $R_{[P]}:=R_{[R\setminus P]}.$ Moreover, if $I$ is an ideal of $R$, then $[I]R_{[P]}:=\{x\in \mathsf T(R)\mid xa\in I\, \text{for some} \, a\in R\setminus P\}$ is an ideal of $R_{[P]},$ and $[P]R_{[P]}$ is a prime ideal of $R_{[P]}.$

\subsection{Atoms in monoids and rings} Let $H$ be a monoid and let $a$ be a non-unit of $H$. Then $a$ is called an \defit{atom} of $H$ if $a=uv$ for some $u, v \in H$ implies that $u \in H^{\times}$ or $v \in H^{\times}$. Moreover, $a$ is said to be a \defit{prime element} of $H$ if $a \mid uv$ for some $u,v \in H$ implies $a \mid u$ or $a\mid v$. We say that $H$ is \defit{atomic} if every non-unit can be written as a finite product of atoms. For each non-unit $a\in H$, we let 
\[\mathsf{L}_H(a)=\mathsf{L}(a)=\{k\in\mathbb{N}\mid a \text{ is a product of $k$ atoms of $H$}\}\subseteq\mathbb{N}\]  
be the \defit{set of lengths} of $a$. Furthermore, we set $\mathsf{L}_H(a)=\mathsf{L}(a)=\{0\}$ for each $a\in H^{\times}$. An atomic monoid $H$ is said to be \defit{factorial} if every atom of $H$ is a prime element and it is called \defit{half-factorial} if $|\mathsf{L}(a)|=1$ for all $a\in H$. Observe that every factorial monoid is half-factorial. Moreover, we say that $H$ is a \defit{BF-monoid} if $H$ is atomic and all sets of lengths are finite, and that $H$ is \defit{length-factorial} if each two distinct factorizations of any element have distinct factorization lengths.

\smallskip

 Let $R$ be a ring. For the purpose of this paper, we will study factorization properties of the ring $R$ focusing solely on the study of factorization properties of its regular elements. Indeed, we say that a regular element $a\in R^\bullet$ is an \defit{atom} if $a = bc$ for some $b, c \in R$ implies that $b \in R^{\times}$ or $c \in R^{\times}$. We call the ring $R$ \defit{atomic} (respectively, \defit{half-factorial}, \defit{BF-ring}, \defit{length-factorial}) if $R^{\bullet}$ is an atomic monoid (respectively, a half-factorial monoid, a BF-monoid, a length-factorial monoid). The set of lengths of factorizations in $R$ is denoted by $\mathcal{L}(R)$ and it is the set $\{\mathsf L_{R^\bullet}(a)\mid a\in R^\bullet\}.$
We point out that factorization lengths and related arithmetical invariants can be developed also for non-cancellative monoid, and so in presence of non-regular elements, as discussed for instance in \cite{AVL96, Cos25, CT23, Tri22}.

\subsection{Ideal Theory of Rings}\label{ssec:ideal theory} Let $R$ be a ring, and $T=\mathsf{T}(R)$ its total quotient ring. An element $x\in \mathsf T(R)$ is called \defit{almost integral} over $R$ if there exists some $c\in R^\bullet$ such that $cx^n\in R$ for all $n\in \N.$ We call \[\widehat{R}=\{x\in \mathsf{T}(R)\mid x\, \text{is almost integral over} \, R\}\] the \defit{complete integral closure} of $R$. Moreover, we say that $R$ is \defit{completely integrally closed} if $R=\widehat{R}.$  
   
    For all $X,Y\subseteq T$, we set 
    \[(X\DP Y) :=\{a\in T\mid aY\subseteq X\} \] 
 An $R$-submodule of $\mathsf{T}(R)$ is called a \defit{Kaplansky fractional ideal} of $R$. We denote by $\mathsf{K}(R)$ the set of Kaplansky fractional ideals. An \defit{(integral) ideal} of $R$ is a Kaplansky fractional ideal of $R$ that is contained in $R$.
For every  $I \in \mathsf{K}(R)$, denote by $I^{-1}:=(R\DP I) =\{x \in \mathsf T(R) \mid xI \subseteq R\}$. Note that $I^{-1} \in \mathsf K(R)$. Moreover, we have that $I_{v_R}:=I_v:=(I^{-1})^{-1}$ and $I_{t_R}:=I_t:=\cup \{J_v \mid J\in \mathsf{K}(R), J\subseteq I, J \,\,\text{finitely generated}\}$ are in $\mathsf K(R)$.

\begin{definition} Let $\ast= v \text{ or } t$. Define $\ast \colon \mathsf{K}(R)\to\mathsf{K}(R)$ that maps  $I \in \mathsf{K}(R)$ into $I_{\ast}\in \mathsf{K}(R)$. The operation $\ast$ is called the $\ast$-\textsf{operation} on $R$. An element $I$ of $\mathsf{K}(R)$ is said to be a $\ast$-ideal if $I_{\ast}=I$. Moreover, we will often call a $v$-ideal a \defit{divisorial} ideal.
\end{definition}

\begin{proposition}[{\cite[Proposition 2.4.26]{El19}}]\label{prop:semistar operation}
    Let $R$ be a ring, $I,J\in \mathsf{K}(R)$ and $*=v$ or $t$.
    \begin{enumerate}[label=\textup(\normalfont \arabic*\textup)]
        \item If $I\subseteq J$, then $I_*\subseteq J_*$.
        \item $I\subseteq I_*=(I_*)_*$.
        \item $(IJ)_*=(I_*J)_*=(I_*J_*)_*.$
        \item $aI_*\subseteq (aI)_*$ for all $a\in T$, and $aI_*= (aI)_*$ for all $a\in T^\bullet.$ 
        \item $((I^{-1})^{-1})^{-1}=I^{-1}.$
         \item $R_*=R$.
    \end{enumerate} 
\end{proposition}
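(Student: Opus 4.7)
The plan is to first isolate the fundamental properties of the inverse map $I \mapsto I^{-1} = (R:I)$, from which most of (1)--(6) follow formally, and then check the remaining items that are specific to $v$ or $t$.

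First I would prove two elementary facts about $(\cdot)^{-1}$: that it is inclusion-reversing (if $I \subseteq J$ then $J^{-1} \subseteq I^{-1}$, immediate from the definition) and that $I \subseteq (I^{-1})^{-1}$ (every $a \in I$ pairs with every $x \in I^{-1}$ to give $ax \in R$). Applying the second fact to $I^{-1}$ gives $I^{-1} \subseteq ((I^{-1})^{-1})^{-1}$, while applying the first fact to $I \subseteq (I^{-1})^{-1}$ reverses to $((I^{-1})^{-1})^{-1} \subseteq I^{-1}$; together these prove (5). From (5) it follows that $(I_v)_v = I_v$, and combined with $I \subseteq I_v$ this establishes (2) for $\ast = v$. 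Property (1) for $v$ is immediate from two applications of the inclusion-reversing property of $(\cdot)^{-1}$. For the $t$-operation, (1) follows because every finitely generated $J \subseteq I$ is also contained in the larger ideal, and (2) holds because $I \subseteq I_t$ comes from $J = aR$ with $a \in I$, while $(I_t)_t = I_t$ can be obtained by showing that any finitely generated subideal $K$ of $I_t$ already lies in $J_v$ for some finitely generated $J \subseteq I$ (take generators of $K$, each living in some $J_v$, and take a common $J$ by union), so $K_v \subseteq (J_v)_v = J_v \subseteq I_t$.

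For (6), I would just compute: $R^{-1} = \{x \in T : xR \subseteq R\} = R$ because $1 \in R$, whence $R_v = R$; then $R_t = R$ follows because every finitely generated $J \subseteq R$ has $J_v \subseteq R_v = R$, while $R \subseteq R_t$ by (2). For (4) with $a \in T$, given $x \in I_v$ and $y \in (aI)^{-1}$, the element $ay$ lies in $I^{-1}$ because $ayI \subseteq R$, hence $x(ay) \in R$, i.e.\ $(ax)y \in R$, giving $ax \in (aI)_v$; this yields $aI_v \subseteq (aI)_v$. When $a \in T^\bullet$ is regular, the reverse inclusion follows by the same computation with $x = z/a$ for $z \in (aI)_v$ (noting $w \in I^{-1}$ implies $w/a \in (aI)^{-1}$). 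The $t$-case reduces to $v$ after intersecting with finitely generated subideals of $aI$ (finitely generated subideals of $aI$ are exactly $a$ times finitely generated subideals of $I$ in the regular case).

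Finally, for (3), I would use the following convenient reformulation: once (1) and (2) are known, the equality $A_\ast = B_\ast$ follows from the two-sided containment $A \subseteq B_\ast$ and $B \subseteq A_\ast$. Since $IJ \subseteq I_\ast J_\ast$ trivially, it suffices to show $I_\ast J_\ast \subseteq (IJ)_\ast$. For $\ast = v$, pick $x \in I_v$, $y \in J_v$ and $z \in (IJ)^{-1}$; then $yz \in I^{-1}$ because $yzI \subseteq J^{-1} \cdot J \subseteq R$ (using $yJ \cdot z \subseteq R$ in a rearranged form --- the cleanest route is to observe that for any $b \in I$ we have $zb \in J^{-1}$, hence $yzb \in R$, so $yz \in I^{-1}$), and then $xyz \in R$, proving $xy \in (IJ)_v$. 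I expect this verification of $I_v J_v \subseteq (IJ)_v$ to be the main technical step; everything else is a formal consequence of order-reversal and idempotency. The $t$-case then follows by restricting to finitely generated subideals and using $(J_1 J_2)_v \subseteq (J_1 J_2)_t \subseteq (IJ)_t$ for finitely generated $J_1 \subseteq I$, $J_2 \subseteq J$.
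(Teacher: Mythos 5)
Your proof is correct. There is, however, nothing in the paper to compare it against: the proposition is quoted from Elliott's book \cite[Proposition 2.4.26]{El19} and the paper gives no internal proof, so a self-contained verification like yours is exactly what would be needed if one wanted to avoid the citation. Your route is the standard one: order-reversal of $I\mapsto I^{-1}$ together with $I\subseteq (I^{-1})^{-1}$ gives (5) and hence idempotency of $v$; (1), (2) and (6) are formal; the only genuine computations are $aI_v\subseteq (aI)_v$ and $I_vJ_v\subseteq (IJ)_v$, both of which you carry out correctly; and all $t$-statements reduce to the $v$-statements by passing to finitely generated subideals, using that for finitely generated $J$ one has $J_v=J_t$ and that $J_1J_2$, resp. $aJ$, is again a finitely generated subideal of $IJ$, resp. $aI$. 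A few small points to tighten: in the proof that $(I_t)_t=I_t$, the ``common $J$'' should be the finitely generated sum $J_1+\cdots+J_n$ of the subideals containing the generators of $K$, not their union, which need not be an $R$-module; in (4), dividing by $a\in \mathsf T(R)^\bullet$ uses that regular elements of the total quotient ring are units, i.e. $\mathsf T(R)^\bullet=\mathsf T(R)^\times$, which is recorded in Section 2 of the paper and should be invoked explicitly; and in (3) you only establish $(IJ)_*=(I_*J_*)_*$, so add the one-line sandwich $(IJ)_*\subseteq (I_*J)_*\subseteq (I_*J_*)_*$ to capture the middle term.
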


Formally speaking, Parts (1)--(4) in the previous proposition state that the $v$-operation and $t$-operation are semi-star operations. For more details on semi-star operations, see for instance \cite{El19}. Note that Part (5) implies that $I^{-1}$ is a divisorial ideal for every $I\in \mathsf K(R).$ 
We say that $X\in \mathsf{K}(R)$ is a \defit{regular fractional $v$-ideal} if $X$ contains a regular element, $cX\subseteq R$ for some $c\in R^\bullet$, and is divisorial, and that $X$ is a \defit{regular $v$-ideal} if $X\subseteq R$, is regular, and is divisorial. We denote by $(\mathcal{F}_v(R), \cdot_v)$ the semigroup of regular fractional $v$-ideals of $R$, and by $\mathcal I_v(R)$ the subsemigroup of regular $v$-ideals of $R$. Then $\mathcal I^{\ast} _v(R) = \mathcal I_v(R) \cap \mathcal F_v(R)^{\times}$ is the monoid of $v$-invertible regular $v$-ideals of $R$ and its quotient group is $\mathcal F_v(R)^{\times}$. The $v$-\defit{class group} $\mathcal{C}_v(R)$ of $R$ is then defined as the quotient $\mathcal{F}_v(R)^\times/\mathcal{H}(R)$, where $\mathcal{H}(R)=\{aR\mid a\in \mathsf{T}(R)^\bullet\},$ and similarly the $t$-\defit{class group}  $\mathcal{C}_t(R)$ of $R$ is $\mathcal{F}_t(R)^\times/\mathcal{H}(R)$, where $\mathcal{F}_t(R)$ is the set of regular fractional $t$-ideals of $R$.
We denote by $v$-$\spec(R)$, resp. $v$-$\spec_r(R)$, the set of all prime $v$-ideals, resp. the set of all regular prime $v$-ideals. Moreover, we denote by $v$-$\max(R)$, resp. $v$-$\max_r(R)$, the set of all maximal $v$-ideals, resp. the set of all regular maximal $v$-ideals. Finally we say that $R$ is a \defit{Mori} ring if $R$ satisfies the ascending chain conditions on regular divisorial ideals.

\begin{definition} A ring $R$ is said to be a
\begin{enumerate}
    \item \defit{Marot ring} if each regular ideal of $R$ is generated by its regular elements, that is $I=I^\bullet R$ for every ideal $I$ of $R$;
    \item  \defit{$t$-Marot} if each regular fractional $t$-ideal of $R$ is $t$-generated by its regular elements,  that is $I=(I^\bullet R)_t$ for every $t$-ideal $I$ of $R$;
    \item  \defit{$v$-Marot ring} if each regular fractional $v$-ideal of $R$ is $v$-generated by its regular elements,  that is $I=(I^\bullet R)_v$ for every $v$-ideal $I$ of $R$;
\end{enumerate}
\end{definition}
If $R$ is noetherian, then $R$ is a Marot ring, and if $R$ is a Marot ring, then $R$ is a $v$-Marot ring, see \cite[Proposition 3.3]{GRR15} for a proof. Moreover, each $v$-ideal is a $t$-ideal thus a $t$-Marot ring is a $v$-Marot ring. Also, if $R$ is a Mori ring, then each $v$-ideal is a $t$-ideal.

\begin{proposition}[{\cite[Theorem 3.5]{GRR15}}]\label{class group thm}
    Let $R$ be a $v$-Marot ring. \begin{enumerate}
        \item If $\emptyset\ne\mathfrak{a}\subseteq \mathbf{q}(R^\bullet)$ is such that $c\mathfrak{a}\subseteq R^\bullet$ for some $c\in \mathbf{q}(R^\bullet)$, then $(\mathfrak a_{v_R})^\bullet=\mathfrak{a}_{v_{R^\bullet}}.$
        \item The map \[\iota\colon \mathcal{F}_{v_R}(R)\to \mathcal{F}_{v_{R^{\bullet}}}(R^\bullet)\setminus\{\emptyset\},\qquad I\mapsto I^\bullet\] is an inclusing preserving semigroup isomorphism. In particular, $\mathcal{I}^*_{v_R}(R)\cong \mathcal{I}_{v_{R^{\bullet}}}^*(R^\bullet)$ and there is an inclusion preserving bijection between ${v_R}$-$\spec_r(R)$ and ${v_{R^{\bullet}}}$-$\spec(R^\bullet)\setminus\{\emptyset\}.$
    \end{enumerate}
\end{proposition}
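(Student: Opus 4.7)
The plan is to establish Part (1) via a careful conductor calculation in which the $v$-Marot hypothesis is used in exactly one place, and then to derive Part (2) almost mechanically from Part (1), leaving the prime-spectrum bijection as the one genuinely new step.

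For Part (1), the workhorse is the identity $(R^\bullet \DP \mathfrak{b}) = (R \DP \mathfrak{b})^\bullet$, valid for any $\mathfrak{b} \subseteq \mathbf{q}(R^\bullet)$: the nontrivial inclusion uses that a regular $x$ with $x\mathfrak{b} \subseteq R$ automatically satisfies $x\mathfrak{b} \subseteq R^\bullet$, since a product of regular elements is regular. Applied to $\mathfrak{b} = \mathfrak{a}$ it gives $(R^\bullet \DP \mathfrak{a}) = (R \DP \mathfrak{a})^\bullet$. Now $J := (R \DP \mathfrak{a})$ is divisorial by Proposition 2.3(5) and is regular because $c \in J$, so the $v$-Marot hypothesis yields $J = (J^\bullet R)_{v_R}$. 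Using Proposition 2.3(1),(4),(6) this promotes to the equality $(R \DP J) = (R \DP J^\bullet)$: if $xJ^\bullet \subseteq R$, then $xJ = x(J^\bullet R)_{v_R} \subseteq (xJ^\bullet R)_{v_R} \subseteq R_{v_R} = R$. Applying the conductor identity once more with $\mathfrak{b} = J^\bullet$ and chaining the equalities gives $\mathfrak{a}_{v_{R^\bullet}} = (R^\bullet \DP J^\bullet) = (R \DP J^\bullet)^\bullet = (R \DP J)^\bullet = (\mathfrak{a}_{v_R})^\bullet$.

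For Part (2), Part (1) does the heavy lifting. Well-definedness of $\iota$ follows by applying Part (1) to $\mathfrak{a} = I^\bullet$ together with the $v$-Marot identity $I = (I^\bullet R)_{v_R} = (I^\bullet)_{v_R}$ (the latter because $I^\bullet \subseteq I^\bullet R \subseteq (I^\bullet)_{v_R}$). Injectivity is immediate from the same identity. For surjectivity, given nonempty $\mathfrak{a} \in \mathcal{F}_{v_{R^\bullet}}(R^\bullet)$, the preimage is $I := (\mathfrak{a} R)_{v_R}$, and Part (1) verifies $I^\bullet = \mathfrak{a}_{v_{R^\bullet}} = \mathfrak{a}$. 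Multiplicativity uses Proposition 2.3(3) to reduce $(IJ)_{v_R}$ to $(I^\bullet J^\bullet R)_{v_R}$ and then invokes Part (1) to obtain $I^\bullet \cdot_v J^\bullet$ after passing to $\bullet$. Inclusion-preservation in both directions is routine from monotonicity of $v$ and $I = (I^\bullet R)_{v_R}$. The restriction to $\mathcal{I}^\ast_v$ is automatic because $\iota$ is a semigroup isomorphism with $\iota(R) = R^\bullet$, hence preserves units and integral elements.

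The main obstacle is the bijection on prime $v$-spectra. The direction $P \mapsto P^\bullet$ is easy: if $a, b \in R^\bullet$ with $ab \in P^\bullet$, then primality of $P$ in $R$ gives $a \in P \cap R^\bullet = P^\bullet$ or likewise for $b$. The converse, that $P := (\mathfrak{p} R)_{v_R}$ is a prime ideal of $R$ whenever $\mathfrak{p}$ is a nonempty prime $v$-ideal of $R^\bullet$, is the real work, because one must control products $ab \in P$ in which $a$ or $b$ is a zero-divisor of $R$. I expect the argument to hinge on the $v$-Marot description $P = (P^\bullet R)_{v_R}$: after reducing to the case that $a$ is a zero-divisor with $a \notin P$ and $ab \in P$, one multiplies by suitably chosen regular elements of $R$ to produce a regular witness in the colon ideal $(P :_R a)$ and compares against $\mathfrak{p}$ via the conductor identity of Part (1) to force $b \in P^\bullet$. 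This reduction from arbitrary ring elements to regular ones is where all the subtlety of $v$-Marot rings concentrates, and it is the step I would allocate the most care to.
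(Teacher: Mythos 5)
The paper itself gives no argument for this proposition; it is quoted from \cite[Theorem 3.5]{GRR15}, so your proposal can only be judged on its own terms. Part (1) is correct and complete: the identity $(R^\bullet \DP \mathfrak b)=(R\DP \mathfrak b)^\bullet$ for $\emptyset\ne\mathfrak b\subseteq \mathbf q(R^\bullet)$, combined with the $v$-Marot identity for $J=(R\DP\mathfrak a)$ and the resulting equality $(R\DP J)=(R\DP J^\bullet)$, is exactly the right mechanism. Two small points should be made explicit: to invoke Proposition \ref{prop:semistar operation}(5) replace $\mathfrak a$ by the Kaplansky fractional ideal $\mathfrak aR$, using $(R\DP\mathfrak a)=(R\DP\mathfrak aR)$; and verify that $J$ has a regular denominator in $R^\bullet$ (write some $a_0\in\mathfrak a$ as $r s^{-1}$ with $r,s\in R^\bullet$; then $rJ\subseteq R$), so that the $v$-Marot hypothesis really applies to $J$. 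Likewise the isomorphism statements of Part (2), including the restriction to $\mathcal I_v^*$, do follow from Part (1) as you describe.

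The genuine gap is the last assertion of Part (2). You prove only the easy direction of the spectrum correspondence (that $P^\bullet$ is a nonempty prime $v$-ideal of $R^\bullet$ for $P\in v$-$\spec_r(R)$); for surjectivity one must show that $P:=(\mathfrak p R)_{v_R}$ is a prime ideal of $R$ whenever $\emptyset\ne\mathfrak p\in v$-$\spec(R^\bullet)$, and here you offer only an expectation (``produce a regular witness in $(P:_R a)$''), not an argument. As written this step is missing, not merely terse: when $a$ is a zero-divisor there is no reason the colon ideal $(P:_R a)$ should contain a usable regular element, so the sketched route does not obviously go through. The step can be closed at the level of ideals rather than elements: given $a,b\in R$ with $ab\in P$, set $I=(P+aR)_{v_R}$ and $J=(P+bR)_{v_R}$; these are regular integral divisorial ideals, and by Proposition \ref{prop:semistar operation}(3) one has $(IJ)_{v_R}=((P+aR)(P+bR))_{v_R}\subseteq P_{v_R}=P$, hence $I^\bullet J^\bullet\subseteq P^\bullet=\mathfrak p$; primality of $\mathfrak p$ as an $s$-ideal forces $I^\bullet\subseteq\mathfrak p$ or $J^\bullet\subseteq\mathfrak p$, and the $v$-Marot identity $I=(I^\bullet R)_{v_R}$ (resp.\ for $J$) then gives $a\in I\subseteq(\mathfrak p R)_{v_R}=P$ or $b\in P$. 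With some such argument inserted, your proposal establishes the proposition; without it, the bijection between $v$-$\spec_r(R)$ and $v$-$\spec(R^\bullet)\setminus\{\emptyset\}$ is not proved.
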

An equivalent statement for $t$-ideals is provided in \cite[Theorem 2.7]{CO22}.
\subsection{Transfer homomorphisms}
A monoid homomorphism $\theta : H \to B$ is called a \defit{transfer homomorphism} if it satisfies the following properties: 
\begin{itemize}
	\item[\bf{(T1)}] $B=\theta(H)B^{\times}$ and $\theta^{-1}(B^{\times})=H^{\times}$.
	\item[\bf{(T2)}] If $u \in  H$, $b, c \in B$ and $\theta(u)=bc$, then there exist $v,w \in H$ such that $u=vw$, $\theta(v)\simeq b$ and $\theta(w)\simeq c$.
\end{itemize}
 A transfer homomorphism is significant because it allows to pull back the arithmetic properties from the target object to the source object (see the next Proposition).

\begin{proposition}[{\cite[Proposition 3.2.3]{GHK06}}]\label{proptransfhomo}
    Let $\theta\colon H\to B$ be a transfer homomorphism. Then \begin{enumerate}[label=\textup{(}\normalfont \arabic*\textup{)}]
        \item An element $u\in H$ is an atom of $H$ if and only if $\theta(u)$ is an atom of $B$.
        \item $H$ is atomic if and only if $B$ is atomic.
        \item If $H$ is atomic, then \[\mathcal{L}(H)=\{\mathsf L_H(a)\mid a\in H\}=\{\mathsf L_B(b)\mid b \in B\}=\mathcal{L}(B).\]
        \item $H$ is a BF-monoid if and only if $B$ is a BF-monoid.
    \end{enumerate}
\end{proposition}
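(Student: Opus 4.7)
The plan is to derive all four parts directly from the axioms (T1) and (T2), essentially along the classical lines of \cite[Proposition 3.2.3]{GHK06}. The key leverage is that (T1) gives a tight correspondence between units on the two sides (namely $u\in H^\times$ iff $\theta(u)\in B^\times$), while (T2) lets any factorization of $\theta(u)$ in $B$ be lifted to a factorization of $u$ in $H$ whose $\theta$-image matches the given one up to associates.

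For part (1), I would first note that $u\notin H^\times$ iff $\theta(u)\notin B^\times$, which follows from $\theta^{-1}(B^\times)=H^\times$ together with the multiplicativity of $\theta$. Assuming $u$ is an atom of $H$ and writing $\theta(u)=bc$, (T2) produces $v,w\in H$ with $u=vw$, $\theta(v)\simeq b$, and $\theta(w)\simeq c$; atomicity of $u$ forces one of $v,w$ to be a unit, and (T1) then forces the corresponding factor in $B$ to be a unit as well. Conversely, if $\theta(u)$ is an atom of $B$ and $u=vw$ in $H$, then $\theta(u)=\theta(v)\theta(w)$ exhibits a unit factor on the $B$-side, which by (T1) lifts to a unit factor in $H$.

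Part (2) follows by iterating (T2) and applying (1). If $B$ is atomic and $u\in H$ is a non-unit, I would factor $\theta(u)=b_1\cdots b_k$ into atoms of $B$ and lift inductively via (T2) to obtain $u=v_1\cdots v_k$ with each $\theta(v_i)\simeq b_i$; part (1) then guarantees that each $v_i$ is an atom of $H$. In the other direction, given a non-unit $b\in B$, the identity $B=\theta(H)B^\times$ gives $b=\theta(u)\eta$ with $\eta\in B^\times$ and $u\notin H^\times$; an atomic factorization of $u$ in $H$ then maps to an atomic factorization of $b$ after absorbing $\eta$ into one of the factors. The same lifting argument, applied to atomic factorizations of $u$ and of $\theta(u)$ and tracked via (1), proves the identity $\mathsf L_H(u)=\mathsf L_B(\theta(u))$ for every $u\in H$. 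Combined with $B=\theta(H)B^\times$ and the unit-invariance of lengths, this yields the equality of the two systems of length sets in (3). Finally, part (4) is immediate from (2) and (3).

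The only nontrivial bookkeeping I expect is in the recursive lifting used in (2) and (3): one must iterate (T2) to split off one atom at a time and then carefully absorb the $B^\times$-factors arising from the association relations $\theta(v_i)\simeq b_i$ into a neighbouring atom, so as not to disturb factorization lengths. None of this requires a new idea beyond disciplined use of (T1) and (T2), so the proof is mostly a matter of organising the induction cleanly.
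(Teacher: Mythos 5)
Your argument is correct, and since the paper does not prove this statement itself but simply cites \cite[Proposition 3.2.3]{GHK06}, your derivation of all four parts from \textbf{(T1)} and \textbf{(T2)} --- lifting factorizations one atom at a time, matching units via $\theta^{-1}(B^{\times})=H^{\times}$, and absorbing the unit discrepancies from $\theta(v_i)\simeq b_i$ into neighbouring atoms --- is exactly the standard proof given in that reference. No gaps to report.
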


\section{A General Transfer Result} \label{3}
In this Section we provide sufficient conditions on a ring $R\subseteq D$  that ensure the inclusion $R^\bullet\hookrightarrow D^\bullet$ is a transfer homomorphism (see Theorem \ref{prop2}). Before doing that, we need some preliminary lemmas.
	\begin{lemma}\label{lemma}
		Let $R \subseteq D$ be rings with $\mathsf T(R) =\mathsf T(D)$. 
		\begin{enumerate}[label=\normalfont(\arabic*)]	
			\item If $D^{\times} \cap R= R^{\times}$, then $D^\times \cap R^{\bullet}= R^\times$.
			\item If $D=RD^{\times}$, then $D^{\bullet}=R^{\bullet}D^\times$. 
			\item $D^{\bullet} \cap R = R^{\bullet}$ and $\mathsf Z(D) \cap R =\mathsf Z(R)$.
		\end{enumerate}
	\end{lemma}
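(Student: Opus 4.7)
The plan is to prove the three parts in the order $(3), (1), (2)$, since (3) underpins the other two. The whole lemma rests on the identification $\mathsf{T}(R)=\mathsf{T}(D)$: an element of $R$ (resp.\ of $D$) is regular if and only if it becomes a unit in its total quotient ring, and with the common total quotient ring this means regularity in $R$ and regularity in $D$ coincide on $R$.

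For (3), I would start from the characterization that, for any ring $A$, an element $a\in A$ lies in $A^{\bullet}$ exactly when $a$ is a unit of $\mathsf{T}(A)$. Applied to $a\in R\subseteq D$, and using $\mathsf{T}(R)=\mathsf{T}(D)$, this gives the chain of equivalences $a\in R^{\bullet}\Longleftrightarrow a\in \mathsf{T}(R)^{\times}\Longleftrightarrow a\in \mathsf{T}(D)^{\times}\Longleftrightarrow a\in D^{\bullet}$, so $D^{\bullet}\cap R=R^{\bullet}$. Complementing in $R$ yields $\mathsf{Z}(D)\cap R=R\setminus(D^{\bullet}\cap R)=R\setminus R^{\bullet}=\mathsf{Z}(R)$.

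Part (1) is then almost immediate and does not actually require (3). Since $R^{\bullet}\subseteq R$, the assumption $D^{\times}\cap R=R^{\times}$ gives $D^{\times}\cap R^{\bullet}\subseteq R^{\times}$. Conversely, every unit of $R$ is regular in $R$ and, because $R\subseteq D$ and the inverse in $R$ is also an inverse in $D$, it is a unit of $D$; hence $R^{\times}\subseteq R^{\bullet}\cap D^{\times}$.

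For (2), the inclusion $R^{\bullet}D^{\times}\subseteq D^{\bullet}$ follows from (3) together with the fact that $D^{\times}\subseteq D^{\bullet}$ and $D^{\bullet}$ is multiplicatively closed. For the reverse inclusion, take $x\in D^{\bullet}$ and, using $D=RD^{\times}$, write $x=ru$ with $r\in R$ and $u\in D^{\times}$. Then $r=xu^{-1}\in D^{\bullet}$, and since $r\in R$, part (3) delivers $r\in R^{\bullet}$, so $x\in R^{\bullet}D^{\times}$. There is no real obstacle here; the only point that needs care is resisting the temptation to identify $R^{\bullet}$ with regularity inside $D$ without first invoking the hypothesis $\mathsf{T}(R)=\mathsf{T}(D)$, which is precisely what makes the argument in (3) work.
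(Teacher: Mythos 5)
Your proof is correct, and it is organized differently from the paper's. The paper proves part (2) directly by elementwise annihilator arguments: for $D^{\bullet}\subseteq R^{\bullet}D^{\times}$ it writes $x=ru$ and argues by contradiction that $r\notin \mathsf Z(R)$, and for the reverse inclusion it again argues by contradiction, invoking the hypothesis $D=RD^{\times}$ a second time to rewrite the annihilating element $t\in D$ as $t=su$ with $s\in R$; part (3) is simply declared clear. You instead make (3) the workhorse, proving it via the characterization that an element of a ring is regular precisely when it becomes a unit of the total quotient ring (this sits well with the identity $\mathsf T(R)^{\times}=\mathsf T(R)^{\bullet}$ recorded in the paper's prerequisites, and it is exactly where $\mathsf T(R)=\mathsf T(D)$ enters), and then you deduce both inclusions of (2) from it. This buys two things: the argument for $R^{\bullet}D^{\times}\subseteq D^{\bullet}$ needs no contradiction and, unlike the paper's, does not use $D=RD^{\times}$ at all (only that $R^{\bullet}\subseteq D^{\bullet}$, i.e. part (3), plus multiplicativity of $D^{\bullet}$), so your version cleanly isolates which hypothesis is responsible for which inclusion; and the "clear" part (3) gets an actual proof. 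The paper's route is more self-contained at the level of raw zero-divisor manipulations, but mathematically both are elementary and establish the same statement.
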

	
	\begin{proof}	
		(1) Clearly, $D^\times \cap R^\bullet\subseteq D^\times \cap R=R^\times,$ and $R^\times \subset D^\times \cap R^\bullet.$\\
		(2) To prove $D^{\bullet}=R^{\bullet}D^\times$, first let $x \in D^{\bullet}$. If $x \in D^\times$, then $x=1\cdot x \in R^{\bullet} D^\times$. Otherwise, assume $x=ru$ where $r \in R$, $u \in D^{\times}$. We claim that $r \in R^{\bullet}$. To verify the claim, assume on contrary that $r \in \mathsf Z(R)$. By definition of a zero-divisor, there exists a $0 \neq t \in R$ such that $tr=0$, but then $tx=tru=0$ implying that $x \in \mathsf Z(D)$ which is a contradiction to our assumption that $x \in D^{\bullet}$. \\
		For the opposite inclusion, first note that $R^{\bullet}D^\times \subseteq D$. Now let $ x \in R^{\bullet}D^\times$. To prove that $ x \in D^{\bullet}$, we show that $x \notin \mathsf Z(D)$. Assume $x=r\epsilon$ for some $r \in R^{\bullet}$ and some $\epsilon \in D^\times$. Now assume on contrary that $x \in \mathsf Z(D)$. That means there exists some $0 \neq t \in D$ such that $tx=0$, consequently $tr\epsilon=0$ which implies $tr=0$ as $\epsilon$ is a unit in $D$. Therefore $r \in \mathsf Z(R)$ (indeed, as $0 \neq t \in D=RD^{\times}$ say $t=su$ for some $s \in R$ and $u\in D^{\times}$ then $s \neq 0$ which follows that there exists a non-zero element $s$ in $R$ such that $sr=0$ in $R$), which is a contradiction to the choice of $r$ and thus the assertion follows. \\
		(3) It is clear. \\
	\end{proof}
    \begin{remark}
		Let $R$ be a ring, $\mathsf{T}(R)$ its total quotient ring, and $\mathbf{q}(R^\bullet)$ the quotient group of the monoid $R^\bullet$. By definition,
		\[\mathsf{T}(R)=\left\{\frac{r}{s}\mid r\in R, s\in R^\bullet\right\}, \qquad\mathbf{q}(R^\bullet)=\left\{\frac{r}{s}\mid r,s\in R^\bullet\right\}.\]
		Clearly, $\mathbf{q}(R^\bullet)\subseteq \mathsf{T}(R)$ and $\mathbf q(R^{\bullet})=\mathsf T(R)^{\bullet} =\mathsf T(R)^{\times}$.

		It follows that if $R\subseteq D$ are rings with $\mathsf T(R) =\mathsf T(D)$, then $\mathbf{q}(R^{\bullet})=\mathsf T(R)^{\times} =\mathsf T(D)^{\times}=\mathbf{q}(D^{\bullet})$ and 
		\[ (R\DP D)=\{a\in \mathsf{T}(D)\mid aD\subseteq R\} \] and \[ (R^\bullet \DP D^\bullet)=\{a\in \mathbf{q}(D^\bullet)\mid aD^\bullet\subseteq R^\bullet\}. \]
	\end{remark}
\begin{lemma}\label{lem:totquotring}
    Let $R\subseteq D$ be a ring extension.
    \begin{enumerate}[label=\textup(\normalfont\arabic*\textup)]
        \item If $(R\DP D)$ contains a regular element of $D$, then $\mathsf{T}(R)=\mathsf{T}(D).$
        \item If $\mathsf T(R) =\mathsf T(D)$ and $D=RD^{\times}$, then $$(R\DP D)\cap \mathsf{T}(D)^\times=(R^\bullet \DP D^\bullet).$$
    \end{enumerate}
\end{lemma}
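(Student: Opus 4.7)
My plan is to handle the two parts of the lemma separately, in both cases exploiting a regular element of the conductor as a common denominator to bridge between $R$ and $D$.

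For part (1), I fix a regular element $c\in (R\DP D)\cap D^\bullet$. Since $c\cdot 1\in R$, the element $c$ lies in $R$, and any relation $rc=0$ in $R$ holds in $D$ and is killed by regularity of $c$, so $c\in R^\bullet$. The key preliminary step is to show the inclusion $R^\bullet\subseteq D^\bullet$: if $s\in R^\bullet$ and $sd=0$ in $D$, then multiplying by $c$ yields $s(cd)=0$ in $R$ with $cd\in R$, which forces $cd=0$ by regularity of $s$, and then $d=0$ by regularity of $c$ in $D$. This inclusion promotes the composition $R\hookrightarrow D\hookrightarrow \mathsf T(D)$ to an injection $\mathsf T(R)\hookrightarrow \mathsf T(D)$ via the universal property of localization. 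For the reverse inclusion, any $d/s\in \mathsf T(D)$ with $d\in D$ and $s\in D^\bullet$ can be rewritten as $(cd)/(cs)$; since $cd,cs\in R$ and $cs\in D^\bullet\cap R=R^\bullet$ (the last equality following from $R^\bullet\subseteq D^\bullet$ together with the routine observation that zero-divisors of $R$ remain zero-divisors of $D$), I obtain $d/s\in \mathsf T(R)$.

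For part (2), the hypothesis $\mathsf T(R)=\mathsf T(D)$ gives $\mathsf T(D)^\times=\mathbf q(D^\bullet)=\mathbf q(R^\bullet)$ and $R^\bullet=R\cap D^\bullet$, so both sides of the claimed equality are subsets of $\mathsf T(D)^\times$. The forward inclusion follows directly from the definitions: for $a\in (R\DP D)\cap \mathsf T(D)^\times$ and $d\in D^\bullet$, the product $ad$ lies in $R$ and in $\mathsf T(D)^\times$ (as a product of units), hence in $R\cap \mathsf T(D)^\times=R^\bullet$. For the reverse inclusion I would use the hypothesis $D=RD^\times$: given $a\in (R^\bullet\DP D^\bullet)$ and $d\in D$, write $d=ru$ with $r\in R$ and $u\in D^\times\subseteq D^\bullet$; then $au\in R^\bullet\subseteq R$, and hence $ad=r(au)\in R$, so $a\in (R\DP D)$.

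The main obstacle, though a mild one, is in part (1), where one must guarantee that the natural map $\mathsf T(R)\to \mathsf T(D)$ is well-defined and injective before identifying the two rings as subsets of a common total quotient ring; this is exactly why establishing $R^\bullet\subseteq D^\bullet$ through the multiplier $c$ is the pivotal preliminary step. Everything else is a direct manipulation of fractions and of the hypotheses, with the decomposition $D=RD^\times$ supplying the missing ingredient for the non-trivial inclusion in part (2).
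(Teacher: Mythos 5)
Your proof is correct and follows essentially the same route as the paper's: in (1) the regular conductor element serves as a common multiplier to show $R^\bullet\subseteq D^\bullet$ and to rewrite fractions of $\mathsf T(D)$ with numerator and denominator in $R$, and in (2) the inclusion $aD^\bullet\subseteq R^\bullet$ combined with the decomposition $D=RD^\times$ gives the nontrivial containment. The only cosmetic difference is that you deduce $ad\in R^\bullet$ in the forward direction of (2) from $R\cap\mathsf T(D)^\times=R^\bullet$ rather than by the paper's direct zero-divisor computation, which is an equally valid one-line argument.
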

\begin{proof}
 (1) Let $x\in (R\DP D)$ be a regular element of $D$.
    If $z=a^{-1}b \in \mathsf{T}(D)$, where $a \in D^\bullet$, and $b\in D$, then $ax,bx \in (R\DP D) \subseteq  R$ and $z=(ax)^{-1}(bx)\in \mathsf{T}(R).$ 
    Conversely, observe that $R^\bullet\subseteq D^\bullet.$ Indeed, if $r\in R^\bullet,$ and $d\in D$ such that $rd=0,$ then $0=rdx$, and hence $d=0$, since $x\in (R\DP D)\cap D^\bullet.$ In particular, $\mathsf{T}(R)\subseteq \mathsf{T}(D).$

    (2) Let $a\in \mathsf{T}(D)^\times$ such that $aD\subseteq R$. In particular, $aD^\bullet\subseteq R^\bullet$. Indeed, let $d\in D^\bullet$ and $r=ad\in R$. Then $r$ is regular in $R$ since if there is an element $0 \neq t\in R$ such that $rt=0$, then $adt=0$, and that would imply that $dt=0\in D$, and $d\in \mathsf{Z}(D)$, whence $a\in (R^\bullet \DP D^\bullet)$.
		Conversely, let $a\in \mathbf{q}(D^\bullet)$ such that $aD^\bullet\subseteq R^\bullet$. Since $\mathbf q(D^{\bullet})=\mathsf T(D)^{\times}$ we get that $a\in \mathsf{T}(D)^\times,$ so we need to prove that $ad\in R$ for every $d\in D$. By our assumption $D=RD^\times$, 
		so every $d\in D$ can be written as $d=r\epsilon$ with $r\in R$, and $\epsilon\in D^\times.$ Then $ad\in R$ if and only if $ar\epsilon\in R$.
		Since every unit of $D$ is a regular element of $D$, we get that $a \epsilon \in a D^{\bullet} \subseteq R^{\bullet} \subseteq R$, hence $ar\epsilon \in R$, whence $a \in (R\DP D)$.
\end{proof}
	
 Next we present our general transfer result. Note that $(R\DP D)$ is divisorial, and thus if $(R\DP D)$ is a maximal ideal of $R$, then $(R\DP D)\in v$-$\max(R)$.

 \begin{theorem}\label{prop2}
		Let $D$ be a ring, $R \subseteq D$ a subring with $\mathsf T (R)=\mathsf T (D)$ such that
\[
D = RD^{\times}, \  D^{\times} \cap R = R^{\times}, \quad \text{and} \quad { (R \DP D) \in v\text{-}\max (R)}  \,.
\]
Then the inclusion $R^{\bullet} \hookrightarrow D^{\bullet}$ is a transfer homomorphism.  In particular, if $R$ is atomic, we have that $\mathcal{L}(R)=\mathcal{L}(D).$
	\end{theorem}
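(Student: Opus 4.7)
The plan is to verify the two axioms (T1) and (T2) for the inclusion $\theta\colon R^\bullet \hookrightarrow D^\bullet$, and then invoke Proposition \ref{proptransfhomo} for the final assertion on sets of lengths.

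Property (T1) is immediate from Lemma \ref{lemma}: part (2) yields $D^\bullet = R^\bullet D^\times$ from $D = RD^\times$, while part (1) gives $\theta^{-1}(D^\times) = D^\times \cap R^\bullet = R^\times$.

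For (T2), let $u \in R^\bullet$ and $b, c \in D^\bullet$ with $u = bc$. Using $D = RD^\times$ I would write $b = b_0 \epsilon$ and $c = c_0 \delta$ with $b_0, c_0 \in R$ and $\epsilon, \delta \in D^\times$; Lemma \ref{lemma}(3) then forces $b_0, c_0 \in R^\bullet$. Setting $\lambda := \epsilon\delta \in D^\times$, one has $u = b_0 c_0 \lambda$. The next step is to consider the ideal
\[
I := (R \DP c_0\lambda) \cap R = \{\, x \in R \mid x c_0 \lambda \in R \,\},
\]
which is divisorial: $(R \DP c_0\lambda)$ is divisorial as it has the form of an inverse (Proposition \ref{prop:semistar operation}(5)), $R$ is divisorial by Proposition \ref{prop:semistar operation}(6), and intersections of divisorial ideals are divisorial by monotonicity of the $v$-operation (Proposition \ref{prop:semistar operation}(1)). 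Moreover $I$ is regular since $b_0 \in I$ (as $b_0 c_0 \lambda = u \in R^\bullet$), and $I \supseteq (R \DP D)$ since $c_0 \lambda \in D$.

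By the maximality hypothesis $(R \DP D) \in v\text{-}\max(R)$, the $v$-ideal $I$ must equal either $R$ or $(R \DP D)$. In the first case $c_0\lambda \in R$, and I take $(v,w) := (b_0, c_0\lambda)$. In the second case $b_0 \in (R \DP D)$, whence $b_0\lambda \in b_0 D \subseteq R$, and I take $(v,w) := (b_0\lambda, c_0)$. In either scenario, $v,w \in R \cap D^\bullet = R^\bullet$ and $vw = b_0 c_0 \lambda = u$, and a direct computation using $b = b_0\epsilon$, $c = c_0\delta$, $\lambda = \epsilon\delta$ shows $v/b, w/c \in D^\times$, so that $v \simeq b$ and $w \simeq c$ in $D^\bullet$. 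This establishes (T2), and the equality $\mathcal L(R) = \mathcal L(D)$ under atomicity then follows from Proposition \ref{proptransfhomo}(3).

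The main delicate point I anticipate is the asymmetric handling of the two cases: when $I$ fails to be all of $R$, it is crucial that the maximality of $(R \DP D)$ forces $I = (R \DP D)$ and thereby places $b_0$ in the conductor, which allows $\lambda$ to be absorbed into the $b$-factor instead of the $c$-factor. This is precisely what the conductor-maximality hypothesis buys.
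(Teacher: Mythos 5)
Your proof is correct, and its skeleton matches the paper's — (T1) from Lemma \ref{lemma}, and for (T2) the decomposition $b=b_0\epsilon$, $c=c_0\delta$ with $b_0,c_0\in R^\bullet$, $\epsilon,\delta\in D^\times$, combined with the maximality of $(R\DP D)$ as a $v$-ideal — but you exploit the maximality differently at the key step. The paper splits on whether $b_0\in\mathfrak m:=(R\DP D)$: the case $b_0\in\mathfrak m$ is handled as in your second branch, while for $b_0\notin\mathfrak m$ it first deduces $(b_0R+\mathfrak m)_v=R$ and then proves $c_0\lambda\in R$ by an explicit chain of $v$-operation identities, multiplying through by an auxiliary regular element $t\in R^\bullet$ with $tc_0\lambda\in R^\bullet$ so as to work inside $R$. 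You replace that computation by the single colon ideal $I=\{x\in R\mid xc_0\lambda\in R\}$, which is divisorial (an inverse intersected with $R$, via Proposition \ref{prop:semistar operation}(5), (6) and (1)--(2)), regular (it contains $b_0$, since $b_0c_0\lambda=u$), and contains $\mathfrak m$; maximality then gives the clean dichotomy $I=R$ (so $c_0\lambda\in R$) or $I=\mathfrak m$ (so $b_0\in\mathfrak m$ and $b_0\lambda\in R$), and in each branch $R\cap D^\bullet=R^\bullet$ finishes the verification of (T2). In effect your $I$ contains $(b_0R+\mathfrak m)_v$, and the paper's displayed computation is exactly a verification of the inclusion $(b_0R+\mathfrak m)_v\subseteq I$; packaging the argument through $I$ avoids the auxiliary element $t$ and the $v$-manipulations, at the small cost of checking that an intersection of divisorial ideals is divisorial (which your appeal to monotonicity handles correctly). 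Both arguments rest on the same ingredients (Lemma \ref{lemma}, Proposition \ref{prop:semistar operation}, and $(R\DP D)\in v\text{-}\max(R)$), and your use of Proposition \ref{proptransfhomo}(3) for $\mathcal L(R)=\mathcal L(D)$ under atomicity agrees with the paper.
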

 \begin{proof}
		\textbf{(T1)} is satisfied by Lemma \ref{lemma}.\\
		To prove \textbf{(T2)}, let $u \in R^{\bullet}$ and $b,c \in D^{\bullet}$ such that $u=bc$. We must prove that there exist $v, w \in R^{\bullet}$ and $\epsilon, \eta \in D^\times$ such that $u=vw$, $v=b\epsilon$ and $w=c\eta$. By Lemma \ref{lemma}(2), $D^{\bullet}=R^{\bullet}D^\times$ so there exist $b_0, c_0 \in R^{\bullet}$ and $\epsilon_0, \eta_0 \in  D^\times$ such that $b=b_0 \epsilon_0$ and $c=c_0 \eta_0$. We distinguish two cases.\\
		CASE I: $b_0 \in \mathfrak{m}=(R\DP D)$.
		
		Then $b_0 x \in R$ for any $x \in D$, in particular $b_0 \epsilon_0 \eta_0 \in R$. Also, $b_0 \epsilon_0 \eta_0 \in R^{\bullet}D^\times=D^{\bullet}$ therefore Lemma \ref{lemma}(3) implies that $b_0 \epsilon_0 \eta_0 \in R^{\bullet}$. Now we set $v=b_0 \epsilon_0 \eta_0$, $w=c_0$, $\epsilon= \eta_0$, $\eta={\eta_0}^{-1}$ and we are done.  \\
	CASE II: $b_0\notin \mathfrak{m}=(R\DP D).$

 Then $\mathfrak{m}\subsetneq b_0R+ \mathfrak{m}\subseteq (b_0R+\mathfrak{m})_v\subseteq R $. By assumption, $\mathfrak{m}$ is a maximal $v$-ideal, hence $(b_0R+\mathfrak{m})_v= R.$ Since $\mathsf{T}(R)=\mathsf T(D),$ the quotient groups $\mathbf{q}(R^\bullet)=\mathbf{q}(D^\bullet)$ also coincide. 
 Then there exists $t\in R^\bullet$ such that $tc\epsilon_0\in R^\bullet.$ Observe also that $c\epsilon_0\mathfrak{m}\subseteq R$ implies  $tc\epsilon_0\mathfrak{m} \subseteq tR.$ Hence $$tbcR+ tc\epsilon_0\mathfrak{m}\subseteq tbcR+ tR=tR.$$ Therefore the following inclusions hold
\begin{equation*}
    \begin{split}
        tc\epsilon_0\in tc\epsilon_0R &=tc\epsilon_0(b_0R+ \mathfrak{m})_v\\
        &= (tc\epsilon_0(b_0R+  \mathfrak{m}))_v\\
        &=(tc\epsilon_0b_0R+ tc\epsilon_0\mathfrak{m})_v\\
        &=(tcbR+ tc\epsilon_0\mathfrak{m})_v \\ 
        &\subseteq (tR)_v =t(R_v)=tR.
    \end{split}
\end{equation*}
Since $t$ is regular, we get that $c\epsilon_0\in R.$ The statement follows by setting $\epsilon=\epsilon_0^{-1},\eta=\epsilon_0, v= b_0,w=c\epsilon_0.$
	\end{proof}

\section{Krull rings and C-rings}\label{4}
Krull rings with zero-divisors were introduced in \cite{Hu76} and then further developed independently by  Kennedy \cite{Ken80} and by Portelli and Spangher \cite{PS83}. Integrally closed noetherian rings are Krull rings. C-rings were introduced in \cite{GRR15} to study the arithmetic of non-integrally closed higher dimensional noetherian rings, generalizing the concepts of C-monoids and C-domains. Krull rings with finite class group are C-rings (see \cite[Corollary 4.4]{GRR15}) and further examples are given in Proposition \ref{prop: crings}. For an historical overview we refer to \cite{CO23}.

\smallskip

\subsection{Krull rings} Let $T$ be a commutative ring.  A \defit{rank-one discrete valuation} on $T$ is a surjective map $\vv\colon T \to \Z\cup \{\infty\}$ such that for all $x,y\in T$ \begin{enumerate}[label=\textup(\roman*\textup)]
\item $\vv(xy)=\vv(x)+\vv(y)$,
\item $\vv(x+y)\ge \min\{\vv(x),\vv(y)\},$
\item $\vv(1)=0$ and $\vv(0)=\infty. $
\end{enumerate} Let $R$ be a commutative ring. If there is a rank-one discrete valuation $\vv$ on $\mathsf{T}(R)$ such that \[R=\{x\in \mathsf{T}(R)\mid \vv(x)\ge 0\}\quad \text{and}\quad P=\{x\in \mathsf{T}(R)\mid \vv(x)>0\},\] then $(R,P)$ is called a \defit{rank-one discrete valuation pair} of $\mathsf{T}(R),$ and $R$ is called a \defit{rank-one discrete valuation ring} (shortly, rank-one DVR).
\begin{theorem}[{\cite[Theorem 3.5]{CK21}}]
    The following two statements are equivalent for a ring $R$.
    \begin{enumerate}[label=\textup(\normalfont\alph*\textup)]
        \item There exists a family $\{(V_\alpha,P_\alpha)\mid \alpha \in \Delta\}$ of rank-one discrete valutation pairs of $\mathsf{T}(R)$ with associated valuations $\{\vv_\alpha\mid \alpha\in \Delta\}$ such that 
\begin{enumerate}[label=\textup(\normalfont\roman*\textup)]
    \item $R=\bigcap_{\alpha\in \Delta}V_\alpha,$
    \item $P_\alpha$ is a regular ideal for all $\alpha\in \Delta,$
    \item for each regular $x\in \mathsf{T}(R)$, $\vv_\alpha(x)=0$ for all but finitely many $\alpha\in \Delta.$
\end{enumerate}
\item $R$ is a completely integrally closed Mori ring.
    \end{enumerate} If one of the equivalent conditions above holds, we say that $R$ is a \textsf{Krull ring}.
\end{theorem}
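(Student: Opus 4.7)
The strategy is to prove the two implications separately, with (a) $\Rightarrow$ (b) essentially direct and (b) $\Rightarrow$ (a) requiring the construction of the valuation data from the ring.

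For (a) $\Rightarrow$ (b), complete integral closure follows by a valuation argument: if $x \in \mathsf T(R)$ is almost integral over $R$ with witness $c \in R^\bullet$, then applying each $\vv_\alpha$ to $cx^n \in R$ yields $\vv_\alpha(c) + n\vv_\alpha(x) \ge 0$ for every $n \in \N$, forcing $\vv_\alpha(x) \ge 0$ and hence $x \in \bigcap_\alpha V_\alpha = R$. For the Mori property, I would fix an ascending chain $I_1 \subseteq I_2 \subseteq \cdots$ of regular divisorial ideals. Choosing a regular $c \in I_1$, condition (iii) makes the set $\{\alpha : \vv_\alpha(c) > 0\}$ finite, say $\{\alpha_1, \ldots, \alpha_n\}$. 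Associating to each $I_k$ the tuple $(n_{\alpha_i}(I_k))$ with $n_\alpha(I) := \inf\{\vv_\alpha(x) : x \in I^\bullet\}$, divisoriality recovers $I_k$ as $\{x \in \mathsf T(R) : \vv_\alpha(x) \ge n_\alpha(I_k) \text{ for all } \alpha\}$, and each entry lies in the bounded integer interval $[0, \vv_{\alpha_i}(c)]$. The ascending chain then corresponds to a componentwise descending chain in a finite product of finite sets, hence stabilizes.

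For (b) $\Rightarrow$ (a), the natural candidates for $(V_\alpha, P_\alpha)$ are the pairs $(R_{[P]}, [P]R_{[P]})$ as $P$ runs over the regular $v$-maximal ideals of $R$. I would proceed in four steps: first, show that every regular $v$-maximal ideal $P$ is prime and of regular height one, via the Zorn-type arguments available under the Mori hypothesis; second, prove that $R_{[P]}$ is a rank-one DVR with regular maximal ideal $[P]R_{[P]}$, verifying that it inherits complete integral closedness and Mori from $R$ and using regular-height-one to pin down the value group as $\Z$; third, obtain $R = \bigcap_P R_{[P]}$ from the divisoriality identity $aR = \bigcap_P (aR)_{[P]}$ for regular $a$; and fourth, deduce the finite-character property (iii) from the fact that, under Mori, each regular $x \in \mathsf T(R)$ is contained in only finitely many such $P$, namely the minimal regular $v$-primes over $xR$.

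The main obstacle is the second step of the reverse direction: establishing that $R_{[P]}$ is an honest rank-one DVR in the presence of zero-divisors. In an integral domain this reduces to a classical Mori--Nagata style argument, but here one must track carefully how divisoriality and complete integral closure descend from $R$ to the localization $R_{[P]}$, verify that the zero-divisors in $R_{[P]}$ do not obstruct surjectivity of the induced valuation onto $\Z \cup \{\infty\}$, and ensure that $[P]R_{[P]}$ remains a regular ideal throughout. Once this local structure theorem is established, assembling (i)--(iii) is essentially bookkeeping.
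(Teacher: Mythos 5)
First, a point of reference: the paper does not prove this statement at all --- it is quoted verbatim from Chang and Kang \cite[Theorem 3.5]{CK21} and used as the \emph{definition} of a Krull ring, so there is no in-paper argument to compare yours against; I can only assess your sketch on its own merits. Your direction (a) $\Rightarrow$ (b) is essentially sound: complete integral closedness is the one-line valuation argument you give (note that $\vv_\alpha(c)$ is automatically finite because a regular element of $\mathsf T(R)$ is a unit of $\mathsf T(R)$), and the ACC argument works, but the identity you lean on is stated incorrectly: with $n_\alpha(I):=\inf\{\vv_\alpha(x)\mid x\in I^\bullet\}$ the equality $I=\{x\in \mathsf T(R)\mid \vv_\alpha(x)\ge n_\alpha(I) \text{ for all }\alpha\}$ is not justified, since the zero-divisors in $I$ need not obey bounds computed from $I^\bullet$ (this is exactly the kind of statement that motivates Marot-type hypotheses elsewhere in the paper). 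The repair is to take the infimum over all of $I$; then for every $z\in (R\DP I)$ and $y\in I$ one has $\vv_\alpha(z)\ge -\vv_\alpha(y)$, hence $\vv_\alpha(z)\ge -n_\alpha(I)$, and any $x$ satisfying the valuation bounds has $xz\in \bigcap_\alpha V_\alpha=R$, so $x\in I_v=I$; this short duality computation (which you omit) is what actually makes the tuple map injective and the chain stabilize.

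The genuine gap is in (b) $\Rightarrow$ (a), which is the substance of the theorem. You correctly identify that everything hinges on showing that $(R_{[P]},[P]R_{[P]})$ is a rank-one discrete valuation pair for $P$ a regular $v$-maximal (equivalently, regular height-one) prime, but you only name this as ``the main obstacle'' and the route you propose for it does not work as stated: complete integral closedness is \emph{not} in general inherited by localizations (already for domains), Mori-ness of $R_{[P]}$ is itself something to prove, and ``regular height one pins down the value group as $\Z$'' is false without further input --- rank-one non-discrete valuation rings are one-dimensional and completely integrally closed, so discreteness must be extracted from the Mori condition by an actual argument. The standard engine, both in the Krull-domain case and in the zero-divisor treatments (Kennedy, Portelli--Spangher, Chang--Kang), is $v$-invertibility: one first shows that in a completely integrally closed Mori ring every regular divisorial fractional ideal is $v$-invertible, so that $\mathcal F_v(R)^\times$ is a group generated (freely) by the regular height-one primes, and only then deduces that the localizations are DVRs and that the associated valuations are surjective onto $\Z\cup\{\infty\}$ with regular positive part. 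Since this invertibility step is entirely absent from your outline, the hard implication remains unproved; the remaining items (iii), $R=\bigcap_P R_{[P]}$, and finite character via the Mori property are indeed, as you say, standard bookkeeping once the local structure theorem is available.
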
 For further characterizations of Krull rings see again \cite[Theorem 3.5]{CK21}. Moreover, we say that a monoid is \defit{Krull} if it is completely integrally closed and Mori.

\begin{proposition}[{\cite[Theorem 3.5]{GRR15}}] Let $R$ be a $v$-Marot ring. Then $R$ is a Mori \textup(resp. Krull\textup) ring if and only if $R^\bullet$ is a Mori \textup(resp. Krull\textup) monoid.
\end{proposition}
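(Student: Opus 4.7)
The plan is to deduce both equivalences from the inclusion-preserving semigroup isomorphism
\[
\iota\colon \mathcal{F}_{v_R}(R) \to \mathcal{F}_{v_{R^\bullet}}(R^\bullet)\setminus\{\emptyset\}, \quad I \mapsto I^\bullet,
\]
supplied by Proposition \ref{class group thm}(2). For the Mori statement, $\iota$ restricts to an inclusion-preserving bijection between the regular divisorial ideals $\mathcal{I}_{v_R}(R)$ of $R$ and the non-empty divisorial ideals of $R^\bullet$, so the ascending chain condition holds on one side if and only if it holds on the other, which is exactly the Mori equivalence.

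For the Krull statement, since Krull is Mori plus completely integrally closed on both sides, the Mori case reduces everything to transferring the complete integral closure property. The forward direction $R$ Krull $\Rightarrow$ $R^\bullet$ Krull is immediate: if $x \in \widehat{R^\bullet}$, then $x \in \mathbf{q}(R^\bullet) = \mathsf{T}(R)^\times$ and $cx^n \in R^\bullet \subseteq R$ for some $c \in R^\bullet$, so $x \in \widehat{R} = R$ and, being regular, $x \in R^\bullet$.

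The converse is the main obstacle, since an $x \in \widehat{R}$ could a priori be a zero-divisor of $\mathsf{T}(R)$ and hence not lie in $\mathbf{q}(R^\bullet)$, ruling out a direct appeal to $\widehat{R^\bullet} = R^\bullet$. My plan is to bypass this via the stronger fact that in a Krull monoid every non-empty fractional $v$-ideal is $v$-invertible; transporting this across $\iota$ shows that for every regular fractional $v$-ideal $I$ of $R$ there is some $I^\ast \in \mathcal{F}_{v_R}(R)$ with $(II^\ast)_v = R$, and since $I^\ast \subseteq I^{-1}$ this forces $(II^{-1})_v = R$. For $x \in \widehat{R}$ with $cx^n \in R$ for all $n$, I would form $A := \sum_{n \ge 0} x^n R$, a regular Kaplansky fractional ideal of $R$ since $cA \subseteq R$ and $1 \in A$, and set $I := A_v \in \mathcal{F}_{v_R}(R)$. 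The inclusion $xA \subseteq A$ combined with parts (1), (2), and (4) of Proposition \ref{prop:semistar operation} gives $xI = xA_v \subseteq (xA)_v \subseteq I$. From $xI \cdot I^{-1} \subseteq II^{-1} \subseteq R$, passing to the $v$-closure and using Proposition \ref{prop:semistar operation}(4) together with $(II^{-1})_v = R$ yields
\[
xR \;=\; x(II^{-1})_v \;\subseteq\; \bigl(xII^{-1}\bigr)_v \;\subseteq\; R_v \;=\; R,
\]
so $x \in R$ and $R$ is completely integrally closed. The key point is that this argument handles regular and zero-divisor $x$ uniformly, which is precisely what makes the $v$-invertibility route the right tool.
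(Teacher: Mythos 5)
Your proof is correct, but note that the paper itself gives no argument here: the proposition is quoted from \cite[Theorem 3.5]{GRR15}, the same theorem that underlies Proposition~\ref{class group thm}, so what you have produced is a self-contained reconstruction of the cited result from the portion of it that the paper restates. For the Mori half your ACC transfer works, though you should make explicit that the $\iota$-preimage of an integral $v$-ideal $\mathfrak a\subseteq R^\bullet$ is again integral: it is $\mathfrak a_{v_R}\subseteq R_{v_R}=R$ by Proposition~\ref{class group thm}(1), so integral ideals correspond to integral ideals in both directions and the two chain conditions are equivalent. The Krull half is where your route has genuine content: you correctly observe that one cannot pull $\widehat R=R$ back from $\widehat{R^\bullet}=R^\bullet$ directly, since an almost integral element may be a zero-divisor of $\mathsf T(R)$, and instead you transport the group structure of $\mathcal F_{v_{R^\bullet}}(R^\bullet)\setminus\{\emptyset\}$ (every non-empty fractional $v$-ideal of a completely integrally closed monoid is $v$-invertible; this is standard, see \cite{GHK06}) through the semigroup isomorphism $\iota$ and then run the classical ``$v$-invertibility forces complete integral closedness'' computation with $I=\bigl(\sum_{n\ge 0}x^nR\bigr)_v$; the steps $xI\subseteq I$, $(II^{-1})_v=R$ and $xR\subseteq (xII^{-1})_v\subseteq R$ use only Proposition~\ref{prop:semistar operation}(1),(2),(4), and part (4) does hold for arbitrary $a\in\mathsf T(R)$, so zero-divisor $x$ is handled uniformly, exactly as you intend; the forward direction is fine as written. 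In effect your converse re-derives, on the ring side, the implication that $v$-invertibility of all regular fractional $v$-ideals implies $R$ is completely integrally closed, which is essentially one of the equivalences in the characterization of Krull rings cited as \cite[Theorem 3.5]{CK21}; invoking that characterization would shorten the argument, while your explicit computation keeps it elementary and independent of it. Either way, the statement is fully justified.
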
 

\begin{remark}
    A normal ring is a ring $R$ that is reduced (i.e. it has no nilpotent elements) and integrally closed in his total quotient ring (see for instance \cite[Chapter 11.2]{Eis95}). In particular, by \cite[Theorem 10.1]{Hu88}, normal noetherian rings are examples of Krull rings. These rings play a significant role in algebraic geometry, where their importance is highlighted by Serre’s Normality Criterion \cite[Theorem 11.5]{Eis95}, which provides a complete characterization of normal noetherian rings. Furthermore, any non-zero normal noetherian ring is isomorphic to a finite direct product of normal domains. 
\end{remark}

\smallskip

\subsection{C-rings}  
    Let $F$ be a factorial monoid, and $H\subseteq F$ be a submonoid. Two elements $y,y'\in F$ are called $H$-\defit{equivalent} if for all $x\in F$, we have $xy\in H$ if and only if $xy'\in H.$ 
By definition, $H$-equivalence is a congruence relation on $F$, and for $y\in F,$ we denote by $[y]_H^F$ the $H$-\defit{equivalence class} of $y.$ 
 Then \[\mathcal{C}^*(H,F)=\left\{[y]_H^F\mid y\in (F\setminus F^\times)\cup \{1\}\right\}\] is a commutative semigroup, called the \defit{reduced class semigroup} of $H$ in $F$.

\begin{definition} Let $H$ be a monoid, and let $R$ be a ring.
\begin{enumerate}
    \item  $H$ is called a C-\defit{monoid} if it is a submonoid of a factorial monoid $F$ such that $H\cap F^\times=H^\times$ and $\mathcal{C}^*(H,F)$ is finite;
    \item $R$ is called a C-\defit{ring} if $R^{\bullet}$ is a C-monoid.
\end{enumerate}
\end{definition}
Let $H$ be a C-monoid. Then $H$ is Mori, the conductor $(H\DP \widehat{H})\ne \emptyset,$ and $\mathcal{C}_v(\widehat{H})$ is finite \cite[Chapter 10]{GHK06}. For arithmetic results and open problems on C-monoids, we refer to \cite{FG05, GHK06, GKL26, Rei13}.

\smallskip

\subsection{Transfer Krull rings and transfer C-rings} A monoid $H$ is said to be a \defit{transfer Krull monoid} if there exists a Krull monoid $B$ and a transfer homomorphism $\theta \colon H \to B$. Since the identity homomorphism is trivially a transfer homomorphism, Krull monoids are transfer Krull, but transfer Krull monoids need be neither Mori nor completely integrally closed. Another important class of transfer Krull monoids is half-factorial monoids. In that case, for a half-factorial monoid $H$, the map $\theta : H \to (\mathbb N_0, +)$, $a \mapsto \max \mathsf L(a)$, is a transfer homomorphism. But in general,
transfer Krull monoids are neither Krull monoids nor half-factorial monoids. An interested reader could see \cite{BR22}. Finally, a \defit{transfer Krull ring} is a ring $R$ such that $R^\bullet$ is a transfer Krull monoid.
\begin{theorem}\label{thm: transferkrull}
    Let $D$ be a ring and $R\subseteq D$ a subring with $\mathsf{T}(R)=\mathsf{T}(D)$ such that \[D = RD^{\times}, \  D^{\times} \cap R = R^{\times}, \quad \text{and} \quad { (R \DP D) \in v\text{-}\max (R)}  \,.\]
    \begin{enumerate}[label=\textup(\normalfont\arabic*\textup)]
        \item If $D$ is a Krull ring, then $D^\bullet$ is a Krull monoid, $R^\bullet \hookrightarrow D^\bullet$ is a transfer homomorphism, and $\mathcal{L}(R)=\mathcal{L}(D).$ 
        \item If $D$ is a $\mathrm{C}$-ring, then $D^\bullet$ is a $\mathrm{C}$-monoid, $R^\bullet \hookrightarrow D^\bullet$ is a transfer homomorphism, and $\mathcal{L}(R)=\mathcal{L}(D).$
    \end{enumerate}
\end{theorem}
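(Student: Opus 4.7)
The plan is to deduce both parts from the general transfer result Theorem \ref{prop2}, combined with the structural passage from a Krull (respectively C-) ring $D$ to the monoid $D^\bullet$. The hypotheses on $R \subseteq D$ are exactly those of Theorem \ref{prop2}, so in both parts the inclusion $R^\bullet \hookrightarrow D^\bullet$ is immediately a transfer homomorphism with no further work. What remains is to verify that $D^\bullet$ carries the claimed monoid structure and to conclude the equality of sets of lengths.

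For the first assertion of each part I need to translate the ambient property on $D$ to one on $D^\bullet$. Part (2) is by definition: $D$ being a C-ring is defined as $D^\bullet$ being a C-monoid, so there is nothing to check. Part (1) requires an argument. My approach is to use the rank-one DVR characterization of Krull rings recalled in the preliminaries: there is a family $\{(V_\alpha, P_\alpha)\}_{\alpha \in \Delta}$ of rank-one DVR pairs of $\mathsf T(D)$ with each $P_\alpha$ regular and with the finite-support property on regular elements. Restricting each $v_\alpha$ to $\mathbf q(D^\bullet) = \mathsf T(D)^\times$ presents $D^\bullet$ as an intersection of DVR-monoids satisfying exactly the divisor-theoretic condition defining a Krull monoid; the regularity of the $P_\alpha$ is what makes the $V_\alpha^\bullet$ genuine rank-one discrete valuation monoids on $\mathbf q(D^\bullet)$. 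Alternatively, one can first check that a Krull ring is $v$-Marot and then invoke the \cite{GRR15} proposition cited in Section \ref{2}; either route yields that $D^\bullet$ is a Krull monoid.

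Once $D^\bullet$ is known to be a Krull monoid (respectively C-monoid), it is in particular Mori, hence a BF-monoid, and therefore atomic. Applying Proposition \ref{proptransfhomo}(2) to the transfer homomorphism $R^\bullet \hookrightarrow D^\bullet$ lifts atomicity to $R^\bullet$, and Theorem \ref{prop2} (or equivalently Proposition \ref{proptransfhomo}(3)) then gives $\mathcal L(R) = \mathcal L(D)$. The main obstacle I anticipate is the ring-to-monoid transition for Krull rings in part (1); the C-ring case is trivial by definition, and everything else is a bookkeeping application of the general results already established. The DVR-intersection route appears cleanest because it mirrors the definition of a Krull monoid essentially verbatim and avoids a separate verification of the $v$-Marot property in full generality.
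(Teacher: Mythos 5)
Your proposal is correct and follows essentially the same route as the paper: the hypotheses are exactly those of Theorem \ref{prop2}, which immediately gives the transfer homomorphism, part (2) is definitional, and $\mathcal{L}(R)=\mathcal{L}(D)$ follows because $D^\bullet$ is Mori (hence atomic) together with Proposition \ref{proptransfhomo}. The only divergence is cosmetic: for the step ``$D$ Krull ring $\Rightarrow D^\bullet$ Krull monoid'' the paper simply cites \cite[Theorem 3.5]{GRR15}, whereas you sketch a direct argument via the rank-one DVR characterization, which does work since the restricted valuations yield a finite-character divisor homomorphism $D^\bullet\to \N_0^{(\Delta)}$.
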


\begin{proof}
    (1) The monoid $D^\bullet$ is a Krull monoid by \cite[Theorem 3.5(4)]{GRR15}. In addition, Theorem \ref{prop2} implies that $R^\bullet \hookrightarrow D^\bullet$ is a transfer homomorphism. Krull rings are Mori rings, hence $D^\bullet$ is a Mori monoid, and in particular atomic. Then Proposition \ref{proptransfhomo} implies that $\mathcal{L}(R)=\mathcal{L}(D).$ 

    (2) The monoid $D^\bullet$ is a C-monoid by definition, and it is a Mori monoid by \cite[Proposition 4.3]{GRR15}. Then $R^\bullet \hookrightarrow D^\bullet$ is a transfer homomorphism and $\mathcal{L}(R)=\mathcal{L}(D)$ by Theorem \ref{prop2}.
\end{proof}

The significance of the previous theorem is that it transfers the properties of the Krull ring (resp. C-ring) $D$ to the monoid  of regular elements of its subring $R.$
See \cite{Sch16} for a recent survery on set of lengths of Krull monoids, and \cite{Oh22, Rei13} for properties on C-monoids.

\smallskip

Next, we will discuss some Krull rings and C-rings that satisfy the assumptions of Theorem \ref{prop2}.

\begin{proposition}\label{prop: crings}
    Let $R$ be a ring and $D:=\widehat{R}$ its complete integral closure.
    \begin{enumerate}[label=\textup(\normalfont\arabic*\textup)]
        \item In the following cases, $D$ is a Krull ring.
            \begin{enumerate}[label=\textup(\normalfont\roman*\textup)]
                \item $R$ is noetherian.
                \item $R$ is $v$-Marot Mori such that $(R\DP D)$ is regular.
                \item $R$ is seminormal, i.e. $R=\{x\in \mathsf{T}(R)\mid x^n\in R \text{ for all sufficiently large } n\in \N\}.$
            \end{enumerate} In all cases, we have $D^\times \cap R=R^\times.$ Thus, if in addition, $D=RD^\times$ and $(R\DP D)\in v$-$\max(R)$, then the assumptions of Theorem \textsc{\ref{thm: transferkrull}} hold.
        \item If $R$ is $v$-Marot Mori such that $(R \DP D)$ is regular and $D/(R \DP  D)$ and $\mathcal C (D)$ are finite, then $R$ is a $\mathrm{C}$-ring.
    \end{enumerate}
\end{proposition}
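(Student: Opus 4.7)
My approach is to verify part (1) case-by-case, showing that $D=\widehat{R}$ is Krull and that $D^{\times}\cap R = R^{\times}$, and then to read off the final sentence from Theorem \ref{thm: transferkrull}; part (2) will follow by checking the hypotheses of a standard C-monoid criterion for Mori monoids and applying it to $R^{\bullet}$.

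For the Krull assertion in (1)(i) I would appeal to the classical fact that the complete integral closure of a noetherian ring is Krull (a zero-divisor Mori--Nagata, see \cite{Hu88}). For (1)(ii), the hypothesis that $R$ is $v$-Marot Mori yields that $R^{\bullet}$ is a Mori monoid (by the proposition preceding Proposition \ref{class group thm}), hence its complete integral closure $\widehat{R^{\bullet}}$ is a Krull monoid by a standard monoid-theoretic fact; regularity of $(R\DP D)$ together with the $v$-Marot bijection of Proposition \ref{class group thm} promotes this to $D=\widehat{R}$ being Krull. For (1)(iii) I would cite the known fact that the complete integral closure of a seminormal ring is Krull. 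For the uniform equality $D^{\times}\cap R = R^{\times}$, let $u\in R\cap D^{\times}$; by Lemma \ref{lemma}(3), $u\in R^{\bullet}$, and almost integrality of $u^{-1}$ supplies $c\in R^{\bullet}$ with $cu^{-n}\in R^{\bullet}$ for every $n\ge 0$. In cases (i) and (ii) the ascending chain of principal divisorial ideals
\[
cR^{\bullet}\subseteq cu^{-1}R^{\bullet}\subseteq cu^{-2}R^{\bullet}\subseteq\cdots
\]
in the Mori monoid $R^{\bullet}$ must stabilize, and cancelling the regular element $c$ forces $u^{-1}\in R^{\bullet}$, whence $u\in R^{\times}$. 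In (iii) I would instead exploit seminormality of $R$ applied to $u^{-1}$, using the defining condition on high powers to promote the uniform denominator $c$ to genuine membership $u^{-1}\in R$. With $D^{\times}\cap R = R^{\times}$ established, the remaining hypotheses $D=RD^{\times}$ and $(R\DP D)\in v\text{-}\max(R)$ are precisely what Theorem \ref{thm: transferkrull} requires.

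For part (2), (1)(ii) gives that $D$ is Krull, so $D^{\bullet}=\widehat{R^{\bullet}}$ is a Krull monoid and $R^{\bullet}$ is Mori. Regularity of $(R\DP D)$ lifts to $(R^{\bullet}\DP D^{\bullet})\neq\emptyset$ via the $v$-Marot dictionary of Proposition \ref{class group thm}, and finiteness of $\mathcal{C}(D)$ translates to finiteness of $\mathcal{C}_{v}(D^{\bullet})$. The finiteness of $D/(R\DP D)$ then controls the reduced class semigroup $\mathcal{C}^{*}(R^{\bullet},F)$ for a suitable factorial monoid $F$ covering $D^{\bullet}$ (the class-group-finite case makes $D^{\bullet}$ embed in a factorial monoid with finite-index quotient). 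Applying the standard C-monoid criterion for Mori monoids with nonempty conductor and finite class group of $\widehat{R^{\bullet}}$ (e.g.\ \cite[Theorem 2.9.11]{GHK06}) yields that $R^{\bullet}$ is a C-monoid, i.e.\ $R$ is a C-ring. The principal obstacle I expect is the seminormal case (iii) of part (1): the ACC chain argument is unavailable there, and deducing $u^{-1}\in R$ from almost integrality using only seminormality requires either a delicate direct argument or a carefully chosen citation from the seminormal-ring literature. A secondary difficulty in part (2) is pinning down the factorial cover $F$ and verifying that finiteness of the ring-theoretic residue $D/(R\DP D)$ passes faithfully to the monoid-theoretic finiteness of $\mathcal{C}^{*}(R^{\bullet},F)$ that underlies the C-monoid criterion.
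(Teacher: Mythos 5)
Your plan runs parallel to the paper's proof only in part: the paper settles every substantive point of this proposition by citation, namely (1)(i) by \cite[Theorem 10.1]{Hu88}, (1)(ii) by \cite[Theorem 4.8]{GRR15}, (1)(iii) by \cite[Theorem 5.2.8]{HK25}, and (2) by \cite[Corollary 4.9]{GRR15}. Where you replace a citation by an argument, one piece is genuinely sound: your ACC chain $cR^\bullet\subseteq cu^{-1}R^\bullet\subseteq\cdots$ for $u\in R\cap D^\times$ does prove $D^\times\cap R=R^\times$ in cases (i) and (ii) (in case (i) note noetherian $\Rightarrow$ Marot $\Rightarrow$ $v$-Marot and Mori, so $R^\bullet$ is a Mori monoid; alternatively run the chain with the regular divisorial ideals $cu^{-n}R$ of the Mori ring $R$), and this is actually more than the paper records, since its proof never addresses that equality.

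The gaps are exactly at the places where the paper leans on the cited theorems. In (1)(ii), your step from ``$\widehat{R^\bullet}$ is a Krull monoid'' to ``$\widehat{R}$ is a Krull ring'' is not justified: the ring--monoid transfer (Proposition \ref{class group thm} and the Mori/Krull equivalence) requires the ring whose regular elements you are considering --- here $D=\widehat{R}$, not $R$ --- to be $v$-Marot, and you also need the identification $\widehat{R}^{\,\bullet}=\widehat{R^\bullet}$; neither is established in your sketch, and supplying both is precisely the content of \cite[Theorem 4.8]{GRR15}. In (1)(iii) you appeal to an unnamed ``known fact''; as you state it, it cannot be taken verbatim (a rank-one non-discrete valuation domain is seminormal and equals its complete integral closure, yet is not Mori, hence not Krull), so whatever \cite[Theorem 5.2.8]{HK25} asserts is doing real work and must be quoted precisely; moreover you concede that $D^\times\cap R=R^\times$ in the seminormal case is not derived --- your flag correctly identifies the delicate point, but does not close it, and the paper offers no argument there either. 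Finally, in (2) the passage from finiteness of $D/(R\DP D)$ and of $\mathcal C(D)$ to the hypotheses of the C-monoid criterion for $R^\bullet$ (the choice of the factorial cover $F$ and the finiteness of $\mathcal C^*(R^\bullet,F)$) is exactly what \cite[Corollary 4.9]{GRR15} provides, and you explicitly leave it unverified. So, as a blind proof, the proposal is incomplete at these three points, although its overall reduction-to-monoids strategy is consistent with the machinery the cited results are built on.
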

\begin{proof}
    (1) If $R$ is noetherian, the statement follows from \cite[Theorem 10.1]{Hu88}. If $R$ is $v$-Marot Mori such that $(R\DP \widehat{R})$ is regular, then $R$ is a Krull ring by \cite[Theorem 4.8]{GRR15}. Finally, if $R$ is seminormal, then $R$ is Krull by \cite[Theorem 5.2.8]{HK25}.

    (2) It follows from \cite[Corollary 4.9]{GRR15}.
\end{proof}

\subsection{The $D+M$ construction} We discuss further applications of Theorem \ref{prop2} to rings which need not to be Krull.
 A $D+M$ ring is the sum of a ring and a maximal ideal. Inside the class of domains, the most common examples are the domains \[K+XL[X],\quad \text{and} \quad K+XL\llbracket X \rrbracket,\quad \text{where} \quad K\subseteq L\quad \text{are fields.}\] 

In what follows we show that rings of the form $D+M$ where $D$ is a ring and $M$ is one if its maximal ideals are an examples for which Theorem \ref{prop2} applies.  This will also gives us an explicit example of a transfer Krull ring. 

\begin{proposition}\label{D+m prop}
		Let $D$ be a ring, $\{0\}\ne \mathfrak{m}\in \max(D)$ be a maximal ideal of $D$, and $L\subseteq D$ a subfield such that $D=L+\mathfrak{m}$. Let $K\subsetneq L$ be a subfield and $R=K+\mathfrak{m}$. Assume that $\mathsf{T}(R)=\mathsf{T}(D)$, then
        \begin{enumerate}[label=\textup(\normalfont\arabic*\textup)]
            \item $D=RD^{\times},\, D^{\times} \cap R= R^{\times},  \, \text{and}\,\, (R\DP D)=\mathfrak{m}\in \max(R).$
            \item The embedding $R^\bullet \hookrightarrow D^\bullet$ is a  transfer homomorphism.
        \end{enumerate}
	\end{proposition}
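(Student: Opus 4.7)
The plan is to verify the three hypotheses of Theorem \ref{prop2}, namely $D=RD^\times$, $D^\times\cap R=R^\times$, and $(R\DP D)\in v\text{-}\max(R)$. Part (1) is exactly this verification (with the additional identification $(R\DP D)=\mathfrak{m}$), and Part (2) is then an immediate invocation of Theorem \ref{prop2}. The single observation driving every step is that, since $L$ is a subfield of $D$ and $\mathfrak{m}$ is a proper ideal, $L\cap\mathfrak{m}=\{0\}$: any nonzero element of $L\cap\mathfrak{m}$ would be a unit in $L\subseteq D$ inside the proper ideal $\mathfrak{m}$. Consequently the sum $D=L+\mathfrak{m}$ is direct as a $K$-module decomposition, and likewise $R=K+\mathfrak{m}$ is direct, so ``$L$-part'' and ``$\mathfrak{m}$-part'' of any element are uniquely defined.

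For $D=RD^\times$, I would take $d\in D$ and write $d=\ell+m$ with $\ell\in L$, $m\in\mathfrak{m}$. If $\ell=0$ then $d\in\mathfrak{m}\subseteq R$; otherwise $\ell\in L^\times\subseteq D^\times$ and $d=\ell\,(1+\ell^{-1}m)$ with $1+\ell^{-1}m\in 1+\mathfrak{m}\subseteq K+\mathfrak{m}=R$. For $D^\times\cap R=R^\times$, the inclusion $\supseteq$ is trivial, and for the converse I would take $r=k+m\in R\cap D^\times$ and write $r^{-1}=\ell'+m'\in D$; expanding $1=rr^{-1}$ and reducing modulo $\mathfrak{m}$, uniqueness of the direct-sum decomposition forces $k\ell'=1$, so $\ell'=k^{-1}\in K$ and hence $r^{-1}\in K+\mathfrak{m}=R$.

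The conductor calculation is where the hypothesis $K\subsetneq L$ finally comes in, and this is the main (though still mild) obstacle. One inclusion $\mathfrak{m}\subseteq(R\DP D)$ is clear from $\mathfrak{m}D=\mathfrak{m}\subseteq R$. For the other, I would take $a=k+m\in(R\DP D)$ and test against an arbitrary $\ell\in L$: then $a\ell\in R$ forces $k\ell\in K$ by the same uniqueness argument using $L\cap\mathfrak{m}=\{0\}$. If $k\neq 0$ then dividing by $k$ gives $L\subseteq K$, contradicting the strict inclusion $K\subsetneq L$; hence $k=0$ and $a\in\mathfrak{m}$, proving $(R\DP D)=\mathfrak{m}$.

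To finish Part (1), the projection $R\to K$, $k+m\mapsto k$, is well-defined and surjective with kernel $\mathfrak{m}$ (well-definedness again from the direct-sum decomposition), so $R/\mathfrak{m}\cong K$ is a field and $\mathfrak{m}\in\max(R)$. The remark preceding Theorem \ref{prop2} records that $(R\DP D)$ is always divisorial, so being maximal as an ideal it is automatically in $v\text{-}\max(R)$. With all hypotheses of Theorem \ref{prop2} verified (the assumption $\mathsf{T}(R)=\mathsf{T}(D)$ is given), Part (2) follows immediately. No part of the argument requires more than bookkeeping on top of the single input $L\cap\mathfrak{m}=\{0\}$.
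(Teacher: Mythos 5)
Your proposal is correct and takes essentially the same route as the paper: both verify the three hypotheses of Theorem \ref{prop2} from the decomposition $D=L+\mathfrak{m}$ with $L\cap\mathfrak{m}=\{0\}$ (writing $d=\ell+m$ for $D=RD^\times$, expanding $1=xx^{-1}$ modulo $\mathfrak{m}$ for $D^\times\cap R=R^\times$, and using $K\subsetneq L$ to force the $K$-component of a conductor element to vanish), and then conclude via the divisoriality of $(R\DP D)$ and Theorem \ref{prop2}. Your only additions are cosmetic: you test the conductor against all $\ell\in L$ instead of a fixed $x\in L\setminus K$, and you spell out $R/\mathfrak{m}\cong K$ to justify $\mathfrak{m}\in\max(R)$, a point the paper leaves implicit.
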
	

    \begin{proof}
        (1) Clearly, $L^\times R\subseteq D^\times R\subseteq D$, and $R^\times \subseteq D^\times \cap R$. Moreover, $\mathfrak{m}$ is a common ideal of $R$ and $D$, and $(R\DP D)$ is by definition the largest common ideal of $R$ and $D$, hence we get that $\mathfrak{m}\subseteq (R\DP D)\subseteq R$. 
			
			Let $a\in D$, then $a=u+m$, for some $u\in L$ and $m\in \mathfrak{m}$. If $u=0$, then $a=m\in R\subseteq L^\times R$. If $u\ne 0$, then $u$ is an invertible element 
			and $u^{-1}a=1+u^{-1}m\in 1+\mathfrak{m}\subseteq R$, whence $a =u(u^{-1}a)\in L^\times R$. This implies that $D=L^\times R=D^\times R$.
			
			Let $x\in D^\times \cap R$ such that $x=u+m \in R$ and $x^{-1}=u'+m' \in D$ where $u\in K$, $u'\in L$, and $m,m'\in \mathfrak{m}$. Then $1=xx^{-1}=uu'+um'+u'm+mm'$, and $1-uu'\in L\cap \mathfrak{m}=\{0\}.$ 
			So this implies that $uu'=1$, and $u'=u^{-1}\in K$, thus $x^{-1}\in K+\mathfrak{m}=R$ whence $D^{\times} \cap R= R^{\times}$.
			
			If $0\ne z\in (R\DP D)$, and $x\in L\setminus K$, then $zx\in R\setminus K\subseteq \mathfrak{m}$ and $x\notin \mathfrak{m}$, hence $z\in \mathfrak{m}$. 

            (2) This follows directly from part (1) and Theorem \ref{prop2}.
    \end{proof}

\begin{example}
Let $L/K$ be a  field extension, let $L[X]$ be the polynomial ring over $L$, let
$n\ge 2$ be an integer, $L[\theta]=L[X]/(X^n)$, and $R_n=K+\theta L[\theta]$. So $$R_n=\{a_0+a_1\theta+\cdots+a_{n-1}\theta^{n-1}\mid a_0\in K,a_i\in L \text{~for all~} i \in [1, n-1]\}.$$ By \cite[Proposition 2.1]{CK23}, $\mathsf{T}(R_n)=L[\theta]$, and $Z(R_n)=\theta L[\theta].$
Hence the maximal ideal $\mathfrak{m}=\theta L[\theta]$ of $L[\theta]$ is not regular, but the assumptions of Proposition \ref{D+m prop} are still satisfied. In particular, $R_n^\bullet \hookrightarrow L[\theta]^\bullet $ is a transfer homomorphism, and $R_n$ is a transfer Krull ring (indeed, since $L[\theta]$ is a total quotient ring, so in particular a Krull ring).  Notice that the ring $R_n$ can be seen as a pullback from the following: 
\[\begin{tikzcd}
	R_n\ar[rr]\ar[d,hook] & & K \ar[d,hook] \\ L[\theta] \ar[rr] & & L.
\end{tikzcd}\] 
\end{example}
 
For pullback constructions in rings with zero-divisors, see \cite{CF21, CK23, ZCKZ24}.

\section{Primary rings and weakly Krull rings}\label{5} Weakly Krull domains and weakly Krull monoids were introduced in the 1990s by Anderson, Mott, and Zafrullah \cite{AMZ92} and Halter-Koch \cite{HK98}, respectively. Weakly Krull rings were introduced only recently by Chang, and Oh \cite{CO22}. The arithmetic of weakly Krull Mori monoids $H$ found a lot of attention in factorization theory, in particular in the case when the conductor $(H\DP \widehat{H})\ne \emptyset$ (for a sample, see \cite{CFW22, FW22, GHK06, GKR15}). Thus by Theorem \ref{prop:height1}, if $R$ is a $v$-Marot weakly Krull Mori ring, then all these results hold true for $R^\bullet$ and $\mathcal{I}_v^*(R).$ A trivial example of weakly Krull rings are Marot primary rings.

\subsection{Primary rings}
\begin{definition}Let $H$ be a monoid, and let $R$ be a ring. 
    \begin{enumerate}
        \item $H$ is  \defit{primary} if $H\ne H^\times$, and for every $a,b\in H\setminus H^\times$ there exists $n\in \N$ such that $b^n\subseteq aH;$
        \item $H$ is \defit{strongly primary} if $H\ne H^\times$, and for every $a\in H\setminus H^\times$ there exists $n\in \N$ such that $(H\setminus H^\times)^n\subseteq aH;$
        \item $R$ is  \defit{primary} (resp. \defit{strongly primary}) if the monoid $R^\bullet$  of its regular elements is primary (resp. strongly primary).
    \end{enumerate}
\end{definition}
\begin{lemma}[{\cite[Lemma 2.7.7]{GHK06}}]\label{lemma:primary}
    Let $H$ be a monoid such that $H\ne H^\times$. Then $H$ is primary if and only if s-$\spec(R)=\{\emptyset, H\setminus H^\times\}.$
\end{lemma}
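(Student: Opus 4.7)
The plan is to prove both implications by leveraging the fact that $\emptyset$ and $H\setminus H^\times$ are always prime $s$-ideals: the empty set vacuously, and $H\setminus H^\times$ because the complement of a subgroup is closed under multiplication by $H$ and the product of two elements is a non-unit as soon as one factor is. So the real content is the converse, namely that no \emph{other} proper prime $s$-ideals exist if and only if the primary condition holds.

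For the forward direction, I would take a non-empty prime $s$-ideal $\mathfrak{p}$ of a primary monoid $H$ and prove $\mathfrak{p} = H\setminus H^\times$. The inclusion $\mathfrak{p}\subseteq H\setminus H^\times$ is immediate: if some unit $u$ lay in $\mathfrak{p}$, then $\mathfrak{p}\supseteq uH = H$, ruling out $\mathfrak{p}$ as a proper prime. For the reverse inclusion, pick any $a\in\mathfrak{p}$ (which is necessarily a non-unit by what we just said) and any $b\in H\setminus H^\times$. Primarity furnishes $n\in\mathbb{N}$ with $b^n\in aH\subseteq\mathfrak{p}$, and iterated application of the prime property yields $b\in\mathfrak{p}$.

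For the backward direction I plan a contrapositive/Zorn argument. Assuming $s$-$\spec(H)=\{\emptyset, H\setminus H^\times\}$, suppose for contradiction that there exist non-units $a,b\in H\setminus H^\times$ with $b^n\notin aH$ for every $n\ge 1$. The set $S=\{b^n : n\ge 0\}$ is multiplicatively closed and disjoint from $aH$. By Zorn's lemma applied to the family of $s$-ideals containing $aH$ and disjoint from $S$ (nonempty since $aH$ itself qualifies, and closed under unions of chains), choose a maximal such $s$-ideal $\mathfrak{p}$. The goal is to argue $\mathfrak{p}$ is prime: if $xy\in\mathfrak{p}$ but $x,y\notin\mathfrak{p}$, then $\mathfrak{p}\cup xH$ and $\mathfrak{p}\cup yH$ are strictly larger $s$-ideals (using that in a monoid the $s$-ideal generated by $\mathfrak{p}$ and $x$ is exactly $\mathfrak{p}\cup xH$), hence they must meet $S$; writing $b^m=xh$ and $b^k=yh'$ gives $b^{m+k}=(xy)hh'\in\mathfrak{p}\cap S$, contradiction. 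Since $a\in\mathfrak{p}$, this prime $s$-ideal is non-empty, so by hypothesis $\mathfrak{p}=H\setminus H^\times$. But then $b\in \mathfrak{p}\cap S$, again contradicting the Zorn construction.

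The step I expect to be the main obstacle is the technical verification that $\mathfrak{p}$ produced by Zorn is prime, in particular checking that $\mathfrak{p}\cup xH$ really is an $s$-ideal in the monoid setting (which is immediate once one notices $(\mathfrak{p}\cup xH)H=\mathfrak{p}H\cup xH=\mathfrak{p}\cup xH$) and then chasing the multiplicativity of $S$ to produce the witness $b^{m+k}$ inside $\mathfrak{p}\cap S$. Everything else is essentially bookkeeping around the definitions of units, $s$-ideals, and the primary condition.
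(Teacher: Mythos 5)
Your argument is correct. Note that the paper itself does not prove this lemma but cites \cite[Lemma 2.7.7]{GHK06}, so there is no in-paper proof to match; measured against that source, your forward direction is the standard one (any non-empty prime $s$-ideal is contained in $H\setminus H^\times$ by properness, and contains it by primarity plus primeness applied to $b^n\in aH\subseteq\mathfrak p$), while your backward direction takes a heavier route than necessary. Two small points. First, you implicitly use the convention that prime $s$-ideals are proper (when you say a unit in $\mathfrak p$ ``rules out $\mathfrak p$ as a proper prime''); the paper's literal definition omits properness, under which $H$ itself would be prime and the statement false, so the lemma must indeed be read with the GHK06 convention that $H\setminus\mathfrak p$ is a submonoid --- worth stating explicitly. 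Second, the Zorn construction is sound (the maximal $s$-ideal disjoint from $S=\{b^n\}$ is prime by exactly the argument you sketch, and it is proper since $1\in S$), but it can be bypassed: given non-units $a,b$ with $b^n\notin aH$ for all $n$, the set $\mathfrak p=\{x\in H\mid b^n\notin xH \text{ for all } n\in\mathbb N_0\}$ is itself an $s$-ideal, and it is prime, since $b^m\in xH$ and $b^k\in yH$ force $b^{m+k}\in xyH$; it contains $a$, so by hypothesis $\mathfrak p=H\setminus H^\times\ni b$, contradicting $b\in bH$. This avoids choice entirely and is closer to the elementary spirit of the cited lemma; otherwise your proof needs no repair.
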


\begin{lemma}\label{lemma:HK}
    Let $R$ be a ring, let $I$ be a $t$-ideal of $R$ and let $P$ be a minimal prime ideal over $I$. Then $P$ is a prime $t$-ideal of $R$.
\end{lemma}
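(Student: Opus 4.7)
The plan is to show directly that $P_t = P$, following the classical argument that works for integral domains. By the definition of the $t$-operation,
\[
P_t \;=\; \bigcup\bigl\{J_v : J \in \mathsf K(R),\ J \subseteq P,\ J \text{ finitely generated}\bigr\},
\]
so it suffices to prove that $J_v \subseteq P$ for every finitely generated ideal $J = (a_1, \ldots, a_k) \subseteq P$.

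The first key step is to use the minimality of $P$ over $I$ to produce, for each generator $a_i$, an element $b_i \in R \setminus P$ and an integer $n_i \in \N$ with $b_i a_i^{n_i} \in I$. After localizing at the multiplicative set $R \setminus P$, the ideal $PR_P$ is the unique prime of $R_P$ containing $IR_P$: primes of $R_P$ correspond to primes of $R$ contained in $P$, and any such prime containing $I$ must equal $P$ by minimality. Hence $PR_P = \sqrt{IR_P}$, and unraveling the equivalence relation defining $R_P$ yields the desired $b_i$ and $n_i$ from $a_i \in P$.

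Setting $b = b_1 \cdots b_k \in R \setminus P$ and $N = n_1 + \cdots + n_k$, a pigeonhole argument on the monomial generators of $J^N$ shows that $b J^N \subseteq I$. Since $bJ^N$ is finitely generated and $I$ is a $t$-ideal, the definition of $I_t$ yields $(bJ^N)_v \subseteq I_t = I \subseteq P$. Next, Proposition \ref{prop:semistar operation}(3) applied inductively gives $(J_v)^N \subseteq (J^N)_v$, while Proposition \ref{prop:semistar operation}(4) gives $b(J^N)_v \subseteq (bJ^N)_v$. Chaining these inclusions yields $b(J_v)^N \subseteq P$, and since $P$ is prime with $b \notin P$, we conclude $(J_v)^N \subseteq P$, whence $J_v \subseteq P$ by primality of $P$.

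The main technical subtlety is that $b$ need not be regular in $R$, so the full equality $bI_v = (bI)_v$ from Proposition \ref{prop:semistar operation}(4) is unavailable; only the inclusion $bI_v \subseteq (bI)_v$ can be used. Fortunately, the primality of $P$ is exactly strong enough to absorb $b$ out of the product, so the argument goes through without requiring $b$ or any $b_i$ to be regular. The remaining manipulations are routine applications of Proposition \ref{prop:semistar operation}.
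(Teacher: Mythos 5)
Your argument is correct, but it is genuinely different in character from what the paper does: the paper disposes of this lemma with a one-line citation to Halter-Koch's \emph{Ideal Systems} (Proposition 6.6, the general fact that minimal primes over ideals of a finitary ideal system are themselves ideals of that system), whereas you give a self-contained proof by adapting the classical domain-theoretic argument (localize at $P$ to get $PR_P=\sqrt{IR_P}$, pull the resulting relations $b_ia_i^{n_i}\in I$ back to $R$, use pigeonhole to get $bJ^N\subseteq I$ for a finitely generated $J\subseteq P$, and then push the $v$-closure through using Proposition \ref{prop:semistar operation}). All the steps check out in the zero-divisor setting: the prime correspondence under localization and the extraction of $b_i\in R\setminus P$ from the equivalence relation defining $R_P$ need no regularity hypotheses, $J_v\subseteq R$ because $1\in J^{-1}$, and you correctly observe that only the inclusion $b(J^N)_v\subseteq (bJ^N)_v$ from Proposition \ref{prop:semistar operation}(4) is available since $b$ need not be regular, with the primality of $P$ (and $b\notin P$) absorbing $b$ and then the exponent $N$. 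What your route buys is a proof entirely internal to the paper's $\mathsf K(R)$/semistar framework, avoiding the translation into Halter-Koch's ideal-system language that the citation implicitly requires; what the paper's route buys is brevity and the extra generality of the finitary-ideal-system statement. Either is acceptable; if you keep your version, you might add the one sentence that $P\subseteq P_t$ holds by Proposition \ref{prop:semistar operation}(2), so that $J_v\subseteq P$ for all finitely generated $J\subseteq P$ indeed gives $P_t=P$.
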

\begin{proof}
    This follows from \cite[Proposition 6.6]{HK98}.
\end{proof}
\begin{proposition}\label{lemma:stronglyprimary}
    Let $R$ be a $t$-Marot ring such that $R^\bullet\ne R^\times.$
  Then $R$ is primary if and only if $|t$-$\spec_r( R)|=1$. In particular, if $R$ is a Marot primary ring, then  $t$-$\spec_r(R)=\spec_r(R)=\{(R\setminus R^\times)R\}.$
\end{proposition}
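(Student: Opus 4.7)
The plan is to reduce the question to the monoid level by means of the $t$-Marot bijection between $t$-$\spec_r(R)$ and the set of nonempty prime $t$-ideals of $R^\bullet$ (the $t$-ideal analogue of Proposition \ref{class group thm}, namely \cite[Theorem 2.7]{CO22}), and then to apply Lemma \ref{lemma:primary}, which characterizes primary monoids by $s$-$\spec(H)=\{\emptyset,H\setminus H^\times\}$. The residual task is to check that the nontrivial prime $s$-ideal $R^\bullet\setminus R^\times$ is actually a prime $t$-ideal, using the monoid analogue of Lemma \ref{lemma:HK}.

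For the forward direction I would argue: $R$ primary makes $R^\bullet$ a primary monoid, so Lemma \ref{lemma:primary} gives $s$-$\spec(R^\bullet)=\{\emptyset,R^\bullet\setminus R^\times\}$. Because every prime $t$-ideal is a prime $s$-ideal, $t$-$\spec(R^\bullet)$ has at most these two elements. Pick any regular nonunit $a\in R^\bullet$ (which exists by hypothesis); the principal $s$-ideal $aR^\bullet$ is a $t$-ideal of the monoid, and the monoid analogue of Lemma \ref{lemma:HK} forces any minimal prime over it to be a prime $t$-ideal. Such a minimal prime is nonempty, hence equals $R^\bullet\setminus R^\times$, showing this is a prime $t$-ideal. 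The bijection then yields $|t$-$\spec_r(R)|=1$.

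Conversely, let $P$ denote the unique regular prime $t$-ideal of $R$. For $a,b\in R^\bullet\setminus R^\times$, the regular principal ideal $aR$ is a $t$-ideal; Lemma \ref{lemma:HK} forces every minimal prime over $aR$ to lie in $t$-$\spec_r(R)=\{P\}$, giving $\sqrt{aR}=P\ni b$ and thus $b^n\in aR$ for some $n\in\N$. Writing $b^n=ar$, the regularity of $b^n$ and $a$ forces $r\in R^\bullet$, so $b^n\in aR^\bullet$ and $R^\bullet$ is primary.

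For the ``in particular'' part, I would first observe that Marot implies $t$-Marot (for any regular $t$-ideal $I$, $I=I^\bullet R=(I^\bullet R)_t$), so the previous equivalence applies and delivers $|t$-$\spec_r(R)|=1$. For any $P\in\spec_r(R)$, primariness combined with any $a\in P\cap R^\bullet$ yields $b^n\in aR^\bullet\subseteq P$ for every $b\in R^\bullet\setminus R^\times$, hence $(R^\bullet\setminus R^\times)R\subseteq P$; and the Marot identity $P=P^\bullet R$ together with $P^\bullet\subseteq R^\bullet\setminus R^\times$ (since $P$ is proper) gives the reverse inclusion. Thus $\spec_r(R)=t$-$\spec_r(R)=\{(R^\bullet\setminus R^\times)R\}$. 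I expect the main obstacle to be the final identification $(R^\bullet\setminus R^\times)R=(R\setminus R^\times)R$: the nontrivial inclusion amounts to showing every zero-divisor lies in $(R^\bullet\setminus R^\times)R$, which I expect to follow from $R$ being quasi-local with unique maximal ideal $\mathfrak{m}:=(R^\bullet\setminus R^\times)R$; proving this requires ruling out non-regular maximal ideals via a comaximality argument exploiting Marot together with primariness.
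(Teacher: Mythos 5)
Your proof of the stated equivalence is correct and is essentially the paper's own argument: both directions run through the $t$-Marot correspondence of \cite[Theorem 2.7]{CO22} between regular prime $t$-ideals of $R$ and nonempty prime $t$-ideals of $R^\bullet$, together with Lemma \ref{lemma:HK}. The differences are minor. In the forward direction you produce the unique nonempty prime $t$-ideal $R^\bullet\setminus R^\times$ on the monoid side and transfer it, whereas the paper gets nonemptiness of $t$-$\spec_r(R)$ from the existence of maximal $t$-ideals and then restricts; in the converse you verify the primary condition directly via $\sqrt{aR}=P$ and regularity of the cofactor, instead of passing through $t$-$\spec(R^\bullet)$ and Lemma \ref{lemma:primary} as the paper does (note that the step $b\in P$ needs the same minimal-prime argument applied to $bR$, which you leave implicit). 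Your treatment of the ``in particular'' part, including the observation that Marot implies $t$-Marot, is also fine and yields exactly what the paper's proof yields, namely $\spec_r(R)=t\text{-}\spec_r(R)=\{(R^\bullet\setminus R^\times)R\}$.

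The one point you flag as the ``main obstacle'' --- identifying $(R^\bullet\setminus R^\times)R$ with $(R\setminus R^\times)R$ --- is not part of the intended statement, and your proposed route to it cannot work. The ideal displayed in the statement is a typo for $(R^\bullet\setminus R^\times)R$: this is the ideal used throughout the paper's proof and again in Remark \ref{remarks}(1). Moreover, the identification is genuinely false, because a Marot primary ring need not be quasi-local: take $R=k\llbracket X\rrbracket\times k$. This ring is noetherian (hence Marot) and primary with $R^\bullet\ne R^\times$, and indeed $\spec_r(R)=\{Xk\llbracket X\rrbracket\times k\}=\{(R^\bullet\setminus R^\times)R\}$; but $(1,0)$ and $(0,1)$ are nonunits whose sum is a unit, so $(R\setminus R^\times)R=R$, and $k\llbracket X\rrbracket\times\{0\}$ is a non-regular maximal ideal that no comaximality argument can exclude. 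So you should simply stop at $(R^\bullet\setminus R^\times)R$; with that reading your proposal is complete and matches the paper.
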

\begin{proof}
      Suppose that $R$ is primary, then the monoid $R^\bullet$ is primary by definition, and so Lemma \ref{lemma:primary} implies that $s$-$\spec(R^\bullet)=\{\emptyset, R^\bullet\setminus R^\times\}$. Notice that the set of $t$-$\spec_r(R)$ is non-empty, since we assumed that $R\ne \mathsf{T}(R)$ \cite[Lemma 3.2.8(2)]{El19}. Let $P\in t$-$\spec_r(R).$ Then  $P^\bullet=P\cap R^\bullet$ is a non-empty prime $t$-ideal by \cite[Theorem 2.7]{CO22}, whence $P^\bullet=R^\bullet\setminus R^\times.$ Since $R$ is a $t$-Marot ring, we get that $P=(P^\bullet R)_{t_R}=((R^\bullet\setminus R^\times)R)_{t_R}.$ Therefore, $t$-$\spec_r(R)=\{((R^\bullet\setminus R^\times)R)_{t_R}\}.$  Conversely, suppose that $|t$-$\spec_r(R)|=1$, say $t$-$\spec_r(R)=\{P\}.$ We first show that $P\supseteq R^\bullet\setminus R^\times$. Let $x\in R^\bullet \setminus R^\times$. Then $xR$ is a proper $t$-ideal of $R$. Let $L$ be a minimal prime ideal of $xR$. It follows from Lemma \ref{lemma:HK} that $L$ is a prime $t$-ideal of $R$. Since $L$ is regular, we infer that $x\in L=P$. So we obtain that $((R^\bullet\setminus R^\times)R)_{t_R}\subseteq P_{t_R}=P=(P^\bullet R)_{t_R}\subseteq ((R^\bullet\setminus R^\times)R)_{t_R},$ thus $P=((R^\bullet\setminus R^\times)R=_{t_R}$. In order to show that $R^\bullet\setminus R^\times$ is the only non-empty prime $s$-ideal of $R^\bullet$, we prove that $R^\bullet\setminus R^\times$ is the only non-empty prime $t$-ideal of $R^\bullet$, since every prime $s$-ideal is the union of all prime $t$-ideals of $R^\bullet$ contained in it. Let $\mathfrak{q}$ be a non-empty prime $t$-ideal of $R^\bullet,$ and set $Q=(\mathfrak{q}R)_{t_R}.$ Then \cite[Theorem 2.7]{CO22} implies that $Q$ is a regular prime $t$-ideal of $R$ such that $Q\cap R^\bullet=\mathfrak{q}$. Since $Q=P=((R^\bullet\setminus R^\times)R)_{t_R},$ it follows that \[\mathfrak{q}=Q\cap R^\bullet = ((R^\bullet\setminus R^\times)R)_{t_R} \cap R^\bullet = R^\bullet \setminus R^\times.\] Consequently, $t$-$\spec(R^\bullet)=\{\emptyset, R^\bullet\setminus R^\times\},$ and therefore, $R^\bullet$ is primary. So $R$ is primary. 
      
     Moreover, if $R$ is a Marot primary ring and $P\in \spec_r(R)$, then $P=P^\bullet R=(R^\bullet\setminus R^\times)R\subseteq ((R^\bullet\setminus R^\times)R)_{t_R}\subsetneq R$
     since $R^\bullet$ is primary (see \cite[Theorem 2.7]{CO22}). 
     Then, as $R$ is Marot and $R^{\bullet} \setminus R^{\times}$ is a maximal  $s$-ideal of $R^{\bullet}$, we get that $((R^{\bullet} \setminus R^{\times})R)_{t_R}= (R^{\bullet} \setminus R^{\times})R$. In particular,
      $P=((R^\bullet\setminus R^\times)R)_{t_R}=P_{t_R}\in t$-$\spec_r(R).$
\end{proof}

\begin{remark}\label{rmk: falseprop}
    The previous proposition does not hold if we work with $v$-ideals instead of $t$-ideals. Indeed, the existence of maximal $v$-ideals is not always guaranteed, whereas maximal 
$t$-ideals always exist. As a counterexample, let $V$ be a non-discrete rank one valuation domain, and let $M$ be its maximal ideal. Then $V$ is primary, and $M_v=V$. In particular, $v$-$\spec(V)=\emptyset$.
\end{remark}
We recall the concept of locally tameness. Let $H$ be a monoid and let $u$ be an atom of $H$. If $u$ is prime, then we set $\mathbf{t}(H,u)=0.$ If $u$ is not prime, then $\mathbf{t}(H,u)$ is the smallest $N\in \mathbb N_0\cup \{\infty\}$ such that if $m\in \mathbb N$ and $v_1,\ldots, v_m$  are atoms of $H$ with $u\mid v_1\cdots v_m$, then there exists a subproduct of $v_1\cdots v_m$ which is a multiple of $u$, say $v_{i_1}\cdots v_{i_t}$, and a refactorization of this subproduct which contains $u$, say $v_{i_1}\cdots v_{i_t}=uu_2\cdots u_l$, such that $\max\{i_t,l\}\le N.$ We call $H$ \defit{locally tame} if all local tame degrees $\mathbf{t}(H,u)<\infty$ for all atoms $u\in H.$

\begin{proposition}\label{lemma:stronglyprimary1}
    Let $R$ be a $v$-Marot Mori ring such that $R^\bullet\ne R^\times$ and $|v$-$\spec_r(R)|=1$. Then $R$ is a strongly primary ring. Moreover, if $(R\DP \widehat{R})\ne \{0\},$ then $R^\bullet$ is locally tame. 
    
    In particular, a noetherian ring with exactly one regular prime ideal is strongly primary.
\end{proposition}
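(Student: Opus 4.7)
The plan is to reduce everything to the monoid $R^{\bullet}$ and then invoke the well-known arithmetic of primary Mori monoids. First I would observe that, since $R$ is $v$-Marot and Mori, $R^{\bullet}$ is a Mori monoid by the monoid version of \cite[Theorem 3.5]{GRR15} (the Mori analogue already used in the paper for Krull). In a Mori ring the $v$- and $t$-operations agree on regular fractional ideals, so $R$ is also $t$-Marot and $v$-$\spec_r(R)=t$-$\spec_r(R)$. The hypothesis $|v\text{-}\spec_r(R)|=1$ therefore becomes $|t\text{-}\spec_r(R)|=1$, and Proposition \ref{lemma:stronglyprimary} implies that $R$ is primary; equivalently, $R^{\bullet}$ is a primary Mori monoid.

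Next I would cite the classical fact that every primary Mori monoid is strongly primary (see \cite[Proposition 2.7.8]{GHK06}): the unique non-empty prime $v$-ideal $\mathfrak{m}$ of $R^{\bullet}$ contains every non-unit, and the ascending chain condition on divisorial ideals promotes $\sqrt{(aR^{\bullet})_v}=\mathfrak{m}$ to an actual inclusion $(R^{\bullet}\setminus R^{\times})^{n(a)}\subseteq aR^{\bullet}$ for some $n(a)\in\N$. This gives the first assertion.

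For the local tameness statement, I first transfer the conductor condition to the monoid. A direct check shows $\widehat{R^{\bullet}}=(\widehat{R})^{\bullet}$, and because $R$ is $v$-Marot, Proposition \ref{class group thm} applied to $(R\DP \widehat R)$ yields $(R^{\bullet}\DP \widehat{R^{\bullet}})\ne \emptyset$ whenever $(R\DP\widehat R)\ne\{0\}$. Then local tameness of $R^{\bullet}$ follows from the standard theorem for strongly primary Mori monoids whose conductor in the complete integral closure is non-empty (see \cite[Theorem 3.1.5]{GHK06} and the references recalled at the beginning of Section \ref{5}).

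Finally, the ``in particular'' clause is straightforward: every noetherian ring is Marot, hence $v$-Marot, and Mori. If $\mathfrak{p}$ is the unique regular prime ideal of $R$, then picking any $a\in \mathfrak{p}^{\bullet}$ and a minimal prime of $aR$ produces a regular prime — necessarily $\mathfrak{p}$ — which is a prime $t$-ideal by Lemma \ref{lemma:HK} and a $v$-ideal since $R$ is Mori. Thus $|v\text{-}\spec_r(R)|=1$ and the first part of the proposition applies. The main obstacle I expect is making the conductor transfer from $R$ to $R^{\bullet}$ fully rigorous; the remaining steps are bookkeeping plus the cited structure theorems.
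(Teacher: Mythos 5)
Your proposal is correct and takes essentially the same route as the paper: reduce to the monoid $R^{\bullet}$, obtain primariness from the uniqueness of the regular prime $t$-ideal (the paper does this implicitly, you make it explicit via Proposition \ref{lemma:stronglyprimary} and the identification of $t$- and $v$-ideals in Mori rings), and deduce local tameness from the conductor condition via the known theorem for strongly primary Mori monoids (the paper cites \cite[Theorem 3.5]{GHL07}). The only real difference is that the paper proves ``primary Mori implies strongly primary'' inline, writing $\mathfrak{m}=E_{v}$ with $E$ finite and using primariness plus a pigeonhole argument, whereas you cite it from \cite{GHK06}; and, like you, the paper simply asserts the passage from $(R\DP\widehat{R})\ne\{0\}$ to $(R^{\bullet}\DP\widehat{R^{\bullet}})\ne\emptyset$ without further justification.
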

\begin{proof}
    The monoid $R^\bullet$ is Mori by assumption, hence the unique maximal s-ideal $\mathfrak{m}:=R^\bullet\setminus R^\times$ of $R^\bullet$ is minimal, and hence divisorial. Therefore there exists some finite subset $E\subseteq \mathfrak{m}$ such that $\mathfrak{m}=E_{v_{R^\bullet}}$ \cite[Proposition 2.1.10]{GHK06}. If $a\in \mathfrak{m},$ then, since $R$ is primary, there exists some $k\in \N$ such that $\{e^k\mid e\in E\} \subseteq aR^\bullet$. Hence $E^{k|E|}\subseteq aR^\bullet$, and $\mathfrak{m}^{k|E|}=(E_{v_{R^\bullet}})^{k|E|}\subseteq (E^{k|E|})_{v_{R^\bullet}}\subseteq aR^\bullet.$ 

     Moreover, if $(R\DP \widehat{R})\ne \{0\},$ then $(R^\bullet\DP \widehat{R^\bullet})\ne \emptyset,$ and $\widehat{R^\bullet}$ is Krull (see \cite[Theorem 2.3.5]{GHK06}) . Thus, $R^\bullet$ is locally tame by \cite[Theorem 3.5]{GHL07}.
\end{proof}
Proposition \ref{lemma:stronglyprimary1} generalizes the well-known result that one-dimensional local Mori domains are strongly primary. These domains are also locally tame, but strongly primary Mori monoids need not be locally tame in general. Much is known about the arithmetic of strongly primary monoids, in particular in case when they are locally tame (see for instance \cite[Theorem 4.1]{GGT21}). We post the following open problem.

\begin{problem}
    Let $R$ be a $v$-Marot Mori ring such that $|v$-$\spec_r(R)|=1.$ Is $R^\bullet$ locally tame?
\end{problem}
\begin{example}
    Let $k$ be a field, and $R:=k\llbracket X,Y\rrbracket/(X^2)$. Then $R$ is noetherian, with the unique regular prime ideal given by the ideal $(X,Y)$ generated by $X$ and $Y$. The previous lemma implies then that $R$ is strongly primary. 
\end{example}
\begin{definition} Let $H$ be a monoid, and let $R$ be a ring.
    \begin{enumerate}
        \item A non-unit $q$ of $H$ (resp. a non-zero non-unit $q$ of $R$) is called \defit{primary} if for all $b, c \in H$ (resp. for all $b,c\in R$) with $q \mid bc$ and $q \nmid b$, there is some $n \in \mathbb{N}$ such that $q \mid c^n$.
        \item The monoid $H$ (resp. the ring $R$) is said to be \defit{weakly factorial} if every non-unit of $H$ (resp. if every regular non-unit of $R$) is a finite product of primary elements of $H$ (resp. $R$).
    \end{enumerate}  We denote by $\mathcal{P}(H)$ the set of all primary elements of $H$, and by $\mathcal{P}(R)$ the set of all primary elements of $R$.
\end{definition}

A monoid $H$ is \defit{weakly Krull} if $H=\bigcap_{\mathfrak{p}\in \mathfrak{X}(H)}H_\mathfrak{p}$ and each element of $H$ is contained in only finitely many height-one primes of $H$. Here $\mathfrak{X}(H)$ denotes the set of height-1 prime ideal of $H.$  A ring $R$ is a \defit{weakly Krull ring} if $R=\cap_{P\in \mathfrak X_r(R)}R_{[P]}$, and each regular element of $R$ is contained in only finitely many regular height-1 prime ideals of $R.$ In particular, every Krull ring is a weakly Krull ring.

\begin{proposition}[\cite{CO22}]\label{prop: weaklykrull}
Let $R$ be a $t$-Marot ring. 
\begin{enumerate}
    \item $\mathcal{P}(R)\cap R^\bullet =\mathcal{P}(R^{\bullet})$. 
    \item $R$ is a weakly factorial ring \textup(resp. weakly Krull ring\textup) if and only if $R^{\bullet}$ is a weakly factorial monoid \textup(resp. weakly Krull monoid\textup).
    \item $R$ is a weakly factorial ring if and only if $R$ is a weakly Krull ring and $\mathcal{C}_t(R)=0.$
\end{enumerate}
\end{proposition}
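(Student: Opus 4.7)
The overall plan is to reduce all three statements to the $t$-ideal analogue of Proposition~\ref{class group thm}, namely \cite[Theorem 2.7]{CO22}: for a $t$-Marot ring $R$, the assignment $I \mapsto I^{\bullet}$ gives an inclusion-preserving semigroup isomorphism $\mathcal{F}_{t}(R) \to \mathcal{F}_{t}(R^{\bullet}) \setminus \{\emptyset\}$, restricting to bijections between regular prime $t$-ideals of $R$ and non-empty prime $t$-ideals of $R^{\bullet}$, and between $v$-invertible regular $t$-ideals on the two sides. Combined with Lemma~\ref{lemma:HK}, this yields a bijection $\mathfrak{X}_r(R) \leftrightarrow \mathfrak{X}(R^{\bullet})$, since every minimal prime over the principal (hence $t$-) ideal generated by a regular element is itself a regular prime $t$-ideal.

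For part~(1), the inclusion $\mathcal{P}(R) \cap R^{\bullet} \subseteq \mathcal{P}(R^{\bullet})$ is direct: given $q \in \mathcal{P}(R) \cap R^{\bullet}$ and $b, c \in R^{\bullet}$ with $q \mid_{R^{\bullet}} bc$ and $q \nmid_{R^{\bullet}} b$, one first checks that $q \nmid_{R} b$ (any equation $b = qr$ in $R$ forces $r \in R^{\bullet}$, since $b$ and $q$ are regular), then applies primarity in $R$ to get $q \mid_{R} c^n$, and lifts this to $R^{\bullet}$ by the same regularity argument applied to $c^n/q$. The reverse inclusion is the delicate step. Given $q \in \mathcal{P}(R^{\bullet})$, the plan is to show that $(qR)_{t_R}$ is a primary $t$-ideal of $R$. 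By Lemma~\ref{lemma:HK}, every prime of $R$ minimal over $qR$ is a $t$-ideal and necessarily regular, and under the $t$-ideal bijection these correspond to the prime $t$-ideals of $R^{\bullet}$ minimal over $qR^{\bullet}$. Primarity of $q$ in $R^{\bullet}$ forces there to be a unique such prime and makes $qR^{\bullet}$ primary with respect to it; the $t$-Marot hypothesis then allows one to rewrite inclusions of $t$-ideals in $R$ in terms of their regular parts, lifting the primary property back from $R^{\bullet}$ to $R$ and yielding $q \mid_R c^n$.

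Parts~(2) and~(3) then follow by transfer along the same correspondence. Since weak factoriality of $R$ is defined purely in terms of factorizations of regular elements, the weakly factorial equivalence in~(2) is immediate from~(1). For the weakly Krull equivalence, the bijection $\mathfrak{X}_r(R) \leftrightarrow \mathfrak{X}(R^{\bullet})$ combined with the compatibility $(R_{[P]})^{\bullet} \cong R^{\bullet}_{P \cap R^{\bullet}}$ under $t$-Marot transfers the defining intersection $R = \bigcap_{P \in \mathfrak{X}_r(R)} R_{[P]}$ to $R^{\bullet} = \bigcap_{\mathfrak{p} \in \mathfrak{X}(R^{\bullet})} R^{\bullet}_{\mathfrak{p}}$ and back, with the finiteness condition transferring verbatim. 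For~(3), I would combine~(2) with the standard monoid-theoretic fact that a weakly Krull monoid is weakly factorial if and only if its $t$-class group vanishes (see e.g.\ the corresponding results in \cite{GHK06}), together with the isomorphism $\mathcal{C}_t(R) \cong \mathcal{C}_t(R^{\bullet})$ that follows from the $t$-Marot $t$-ideal bijection.

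The main obstacle is the reverse direction of~(1): promoting primarity from $R^{\bullet}$ to $R$ in the presence of zero-divisors. The hypothesis $q \in \mathcal{P}(R^{\bullet})$ only constrains regular witnesses $b, c$, whereas the statement in $R$ must handle $b, c \in R$ that may be zero-divisors; a direct divisibility argument would require approximating zero-divisors by regular elements, which is not available in general. The role of the $t$-Marot assumption is precisely to recast divisibility as an inclusion of regular $t$-ideals, so that the primary property in $R^{\bullet}$ — controlled via Lemma~\ref{lemma:HK} and the $t$-ideal correspondence — can be lifted back to $R$ without requiring any such approximation.
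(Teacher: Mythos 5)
First, note that the paper does not prove this proposition at all: it is quoted from \cite{CO22}, so there is no in-paper argument to compare with, and your sketch has to stand on its own. Its architecture (the $t$-ideal correspondence of \cite[Theorem 2.7]{CO22} together with Lemma~\ref{lemma:HK}) is the right one, but the step you yourself identify as delicate --- the inclusion $\mathcal{P}(R^{\bullet})\subseteq\mathcal{P}(R)$ in (1) --- is not actually carried out, and as outlined it does not close. What your outline establishes is (at best) that $qR$ has a unique minimal prime, namely $P=(\mathfrak{p}R)_{t}$ with $\mathfrak{p}=\sqrt{qR^{\bullet}}$. But an ideal whose radical is prime need not be primary (think of $(x^{2},xy)$ in $k[x,y]$), so ``unique minimal prime plus primarity of $qR^{\bullet}$'' does not yield that $qR$ is a primary ideal; and the sentence ``the $t$-Marot hypothesis then allows one to rewrite inclusions of $t$-ideals in $R$ in terms of their regular parts, lifting the primary property back'' names the difficulty (the witnesses $b,c\in R$ may be zero-divisors, and their membership in $qR$ is an element condition, not an inclusion of regular $t$-ideals) without supplying any mechanism for it.

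The missing device is to attach regular ideals to the offending elements. Given $b,c\in R$ with $bc\in qR$ and $b\notin qR$, set $A=bR+qR$ and $B=cR+qR$; both are regular and $AB\subseteq qR$. If every regular element of $A$ lay in $qR$, then $t$-Marot would give $b\in A\subseteq A_{t}=(A^{\bullet}R)_{t}\subseteq (qR)_{t}=qR$, a contradiction; so there is a regular $a\in A\setminus qR$, i.e.\ $a\notin qR^{\bullet}$. For every regular $b'\in B$ one has $ab'\in qR\cap\mathsf{T}(R)^{\bullet}=qR^{\bullet}$, so primarity of $q$ in $R^{\bullet}$ forces $B^{\bullet}\subseteq\mathfrak{p}$, whence $c\in B\subseteq B_{t}=(B^{\bullet}R)_{t}\subseteq(\mathfrak{p}R)_{t}=P$; finally $P=\sqrt{qR}$ (every minimal prime $Q$ over $qR$ is a regular prime $t$-ideal by Lemma~\ref{lemma:HK}, $Q=(Q^{\bullet}R)_{t}$ by $t$-Marot, and $Q^{\bullet}\supseteq\mathfrak{p}$, so $Q=P$), giving $c^{n}\in qR$. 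Some argument of this kind has to be added before (1) is proved; without it the lift from $R^{\bullet}$ to $R$ fails. A secondary, smaller gap of the same nature occurs in (2): the direction ``$R^{\bullet}$ weakly Krull $\Rightarrow R=\bigcap_{P\in\mathfrak{X}_{r}(R)}R_{[P]}$'' does not transfer ``verbatim'', since the ring-side intersection must capture elements of $\mathsf{T}(R)$ that are zero-divisors, and again needs a $t$-Marot argument (or the citation of \cite[Theorem 4.4]{CO22}) rather than the bijection $\mathfrak{X}_{r}(R)\leftrightarrow\mathfrak{X}(R^{\bullet})$ alone; parts (3) and the weakly factorial half of (2) are fine as reductions to (1) and to the monoid result in \cite{HK98}.
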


\begin{proposition}
    Let $R$ be a ring. 
    \begin{enumerate}[label=\textup(\normalfont\arabic*\textup)]
        \item Suppose that $R$ is $v$-Marot and Mori. Then $R$ is weakly Krull if and only if $v$-$\spec(R)=\mathfrak{X}_r(R).$
        \item Suppose that $R$ is strongly primary. Then $R$ a transfer Krull if and only if $R$ is half-factorial.
    \end{enumerate} 
\end{proposition}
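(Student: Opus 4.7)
Since $R$ is $v$-Marot and Mori, every $v$-ideal of $R$ is a $t$-ideal (as noted in the preliminaries), so Proposition \ref{prop: weaklykrull}(2) gives that $R$ is weakly Krull if and only if $R^\bullet$ is a weakly Krull monoid. The plan is to combine this with the dictionary from Proposition \ref{class group thm}(2): the inclusion-preserving bijection between $v$-$\spec_r(R)$ and $v$-$\spec(R^\bullet)\setminus\{\emptyset\}$ restricts to a bijection between $\mathfrak{X}_r(R)$ and $\mathfrak{X}(R^\bullet)$, because heights are preserved by inclusion-preserving bijections. I would then invoke the classical monoid-theoretic characterization that a Mori monoid $H$ is weakly Krull if and only if every non-empty prime $v$-ideal of $H$ is minimal, i.e., $v$-$\spec(H)\setminus\{\emptyset\}=\mathfrak{X}(H)$. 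Pulling this back through the bijection yields $v$-$\spec(R)=\mathfrak{X}_r(R)$.

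\textbf{Part (2), sufficiency.} If $R$ is half-factorial, the excerpt already notes that $R^\bullet\to(\mathbb N_0,+)$, $a\mapsto\max\mathsf L(a)$, is a transfer homomorphism onto a (trivially) Krull monoid, so $R$ is transfer Krull.

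\textbf{Part (2), necessity.} Assume a transfer homomorphism $\theta\colon R^\bullet\to B$ onto a Krull monoid $B$. First I would show that $B$ inherits strong primariness from $R^\bullet$: given $b\in B\setminus B^\times$, property \textbf{(T1)} gives $b=\theta(a)\epsilon$ with $a\in R^\bullet\setminus R^\times$ and $\epsilon\in B^\times$; picking $n$ with $(R^\bullet\setminus R^\times)^n\subseteq aR^\bullet$, any product $c_1\cdots c_n$ of non-units of $B$ rewrites via \textbf{(T1)} as $\theta(a_1\cdots a_n)\epsilon_1\cdots\epsilon_n$ with $a_i\in R^\bullet\setminus R^\times$, and hence lies in $bB$. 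Next, I would deduce that a strongly primary Krull monoid has a unique minimal prime $v$-ideal: if $\mathfrak{p}_1\neq\mathfrak{p}_2$ were two such, I would choose $a_i\in\mathfrak{p}_i\setminus\mathfrak{p}_{3-i}$ and exploit $a_2^n\in a_1 B\subseteq\mathfrak{p}_1$ together with primality of $\mathfrak{p}_1$ to reach a contradiction. Having a single prime divisor, the reduced monoid $B/B^\times$ embeds saturatedly in $\mathbb N_0$ and so coincides with $\mathbb N_0$; hence $B$ is factorial, in particular half-factorial. Atomicity of $R^\bullet$ and the equality $\mathcal L(R)=\mathcal L(B)$ then transfer back by Proposition \ref{proptransfhomo}, forcing $R$ to be half-factorial. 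The main obstacle is the transfer of strong primariness: it requires carefully tracking the interplay between units and non-units under $\theta$ via \textbf{(T1)} and \textbf{(T2)}, so that the strongly primary bound available in $R^\bullet$ can be imported into $B$ without losing control over which factors stay non-units.
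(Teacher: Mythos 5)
Your argument is correct, and the two parts compare differently with the paper. For part (1) you follow essentially the paper's route: reduce to the monoid $R^\bullet$ via Proposition \ref{prop: weaklykrull} (after observing that a $v$-Marot Mori ring is $t$-Marot), invoke the monoid-theoretic characterization of weakly Krull monoids by their prime $v$-spectrum (the paper cites \cite[Theorem 24.5]{HK98} for exactly the fact you call ``classical''), and pull the condition back through the inclusion-preserving bijection of Proposition \ref{class group thm}. For part (2) the paper is a one-line citation of \cite[Theorem 5.5]{GSZ17} applied to the strongly primary monoid $R^\bullet$, whereas you reprove the relevant direction from scratch: you show that (strong) primariness passes from $R^\bullet$ to the Krull target $B$ via \textbf{(T1)}, that a primary Krull monoid has a unique height-one prime and hence $B_{\mathrm{red}}$ is a saturated submonoid of $\mathbb N_0$, so $B$ is factorial, and then you transfer half-factoriality back with Proposition \ref{proptransfhomo}; the converse is the standard length map to $(\mathbb N_0,+)$ already recorded in the paper's preliminaries. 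Your argument is sound (the transfer of primariness is exactly as you describe, and a saturated submonoid of $\mathbb N_0$ is $d\mathbb N_0\cong\mathbb N_0$); the only points worth making explicit are that $B\ne B^\times$ (which follows from $R^\bullet\ne R^\times$ and $\theta^{-1}(B^\times)=(R^\bullet)^\times$) guarantees $\mathfrak X(B)\ne\emptyset$, and that the saturated embedding of $B_{\mathrm{red}}$ into the free monoid over $\mathfrak X(B)$ is the standard divisor theory of a Krull monoid. What your route buys is a self-contained and more elementary proof of exactly the direction needed (in effect reconstructing the proof idea behind the cited theorem); what the paper's citation buys is brevity and access to the full strength of \cite[Theorem 5.5]{GSZ17}, which contains further equivalent conditions beyond the two used here.
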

\begin{proof} (1)  The ring $R$ is $v$-Marot and Mori, therefore $R$ is $t$-Marot. Then by Proposition \ref{prop: weaklykrull}, $R$ is a weakly Krull ring if and only if $R^\bullet$ is a weakly Krull monoid. Moreover, by \cite[Theorem 24.5]{HK98}, $R^\bullet$ is a weakly Krull monoid if and only if $v$-$\spec(R^\bullet)=\mathfrak{X}(R^\bullet),$ whence the statement then follows from Proposition \ref{class group thm}.

    (2) The monoid $R^{\bullet}$ is a strongly primary monoid. By \cite[Theorem 5.5]{GSZ17} we get that $R^\bullet$ is a transfer Krull monoid if and only if $R^\bullet$ is a half-factorial monoid, therefore $R$ is a transfer Krull ring if and only $R$ is a half-factorial ring.
\end{proof}

\begin{remarks}\label{remarks}
\begin{enumerate}
    \item  For every ring $R$, we have that $R=\bigcap_{M\in \max_r(R)}R_{[M]}$ (see for instance \cite[Theorem 6.1]{Hu88}). Assume now that $R$ is a Marot primary ring, then $M:=(R^\bullet\setminus R^\times)R$ is the unique regular prime ideal. Hence $R=R_{[M]}=\bigcap_{P\in \mathfrak{X}_r(R)}R_{[P]}.$ Clearly each regular non-unit is contained in $M,$ whence $R$ is weakly Krull. 
    \item Let $R$ be a $v$-Marot ring and $P\in \mathfrak{X}_r(R).$ It is easy to verify that $[P]R_{[P]}$ is the only regular prime ideal of $R_{[P]}$. Therefore $R_{[P]}$ is primary by Proposition \ref{lemma:stronglyprimary}.
    \item Let $R$ be a ring, $H=R^\bullet$ the monoid of its regular elements, and $P\in \mathfrak{X}_r(P).$ Then $(R_{[P]})^\bullet= (H)_{P^\bullet}.$ Indeed, if $x\in (R_{[P]})^\bullet$, then there exists $s\in (R\setminus P)^\bullet$ such that $xs\in R.$ Thus $s\in H \setminus P,$ and $xs\in H.$ In particular, $x\in H_{P^\bullet}.$
\end{enumerate}
\end{remarks}

\begin{theorem}\label{prop:height1}
    Let $R$ be a v-Marot weakly Krull Mori ring, and let $H=R^\bullet$ be the monoid of its regular elements. Then \begin{enumerate}[label=\textup(\normalfont\arabic*\textup)]
        \item The map \[\Delta\colon\mathcal{I}_v^*(R)\to \coprod_{P\in \mathfrak{X}_r(R)}(R_{[P]})^\bullet_{\mathrm{red}} \] defined by $\Delta(I)(P)=x_PR_{[P]}^\times$ if $(I^\bullet)_{P^\bullet}=x_PH_{P^\bullet}$ for some $x_P\in H_{P^\bullet}$, is a monoid isomorphism. Moreover, $\mathcal{I}^*_v(R)$ is transfer Krull if and only if it is half-factorial.
        \item The monoids $R^\bullet$ and $\mathcal{I}^*_v(R)$ are weakly Krull. If $(R\DP \widehat{R})\ne\{0\},$ then $(H\DP \widehat{H})$ and $(\mathcal{I}_v^*(R)\DP \widehat{\mathcal{I}_v^*(R)})$ are non-empty.
        \item If $R$ is weakly factorial, then $R$ is a transfer Krull ring if and only if $R$ is half-factorial.
    \end{enumerate}
\end{theorem}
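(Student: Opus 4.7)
The plan is to reduce everything to the monoid $H := R^{\bullet}$ via Proposition \ref{class group thm}, and then invoke the known structure theory of weakly Krull Mori monoids. Since $R$ is $v$-Marot and Mori, every $v$-ideal is a $t$-ideal (Section \ref{2}), so $R$ is in particular $t$-Marot; hence Proposition \ref{prop: weaklykrull} gives that $H$ is weakly Krull, and Proposition \ref{class group thm} identifies $\mathcal{I}_v^*(R)$ with $\mathcal{I}_v^*(H)$ together with a bijection $\mathfrak{X}_r(R) \leftrightarrow \mathfrak{X}(H)$ via $P \mapsto P^{\bullet}$. Remark \ref{remarks}(3) identifies $(R_{[P]})^{\bullet}$ with $H_{P^{\bullet}}$. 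By Remark \ref{remarks}(2), each $R_{[P]}$ has a unique regular prime ideal and, being a localization of a $v$-Marot Mori ring, is itself $v$-Marot Mori; Proposition \ref{lemma:stronglyprimary1} then yields that each $R_{[P]}$ is strongly primary.

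For (1), I would invoke the classical decomposition theorem for weakly Krull Mori monoids (\cite[Theorem 2.3.11]{GHK06}), which gives a monoid isomorphism
\[
\mathcal{I}_v^*(H) \longrightarrow \coprod_{\mathfrak{p} \in \mathfrak{X}(H)} (H_{\mathfrak{p}})_{\mathrm{red}}, \qquad J \mapsto (J_{\mathfrak{p}} H_{\mathfrak{p}}^{\times})_{\mathfrak{p}},
\]
and then compose with $\iota$ from Proposition \ref{class group thm} and the identifications above to obtain $\Delta$. For the transfer Krull equivalence: each reduced component $(H_{P^{\bullet}})_{\mathrm{red}}$ is strongly primary, so by \cite[Theorem 5.5]{GSZ17} it is transfer Krull if and only if half-factorial; combined with the fact that both properties decompose across a coproduct of strongly primary components (sets of lengths in a coproduct of reduced monoids add coordinate-wise), the stated equivalence follows. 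Part (2) is then mostly a compilation: $H$ is weakly Krull by the above, $\mathcal{I}_v^*(R)$ is weakly Krull as a coproduct of primary monoids, and if $(R \DP \widehat{R})$ contains a regular element $c$ then $c\widehat{H} \subseteq c\widehat{R} \subseteq R$ together with $c\widehat{H} \subseteq \mathbf{q}(H)$ forces $c \in (H \DP \widehat{H})$; transporting this $c$ through $\Delta$ (the principal $v$-ideal $(cR)_v$ multiplies $\widehat{\mathcal{I}_v^*(R)}$ into $\mathcal{I}_v^*(R)$ coordinate-wise, since the completely integral closure commutes with the coproduct decomposition) gives the non-emptiness of $(\mathcal{I}_v^*(R) \DP \widehat{\mathcal{I}_v^*(R)})$.

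For (3), if $R$ is weakly factorial then $H$ is a weakly factorial Mori monoid by Proposition \ref{prop: weaklykrull}, so $H_{\mathrm{red}} \cong \coprod_{\mathfrak{p} \in \mathfrak{X}(H)} (H_{\mathfrak{p}})_{\mathrm{red}}$ (each regular element being a product of primary elements, and primary elements living in a single height-one component). The same coproduct argument as in (1) gives that $H$ is transfer Krull iff each $(H_{\mathfrak{p}})_{\mathrm{red}}$ is transfer Krull iff each is half-factorial (by \cite[Theorem 5.5]{GSZ17}) iff $H_{\mathrm{red}}$ is half-factorial, i.e., iff $R$ is half-factorial. The principal obstacle in the plan is the coproduct-level equivalence \emph{transfer Krull iff every primary component is transfer Krull}, used in both (1) and (3): the direction from components to coproduct is direct, since the component-wise map into $\coprod B_{\mathfrak{p}}$ is a transfer homomorphism and the coproduct of Krull monoids is Krull, but the converse requires that any transfer homomorphism defined on $\coprod H_i$ restricts, modulo units, to a transfer homomorphism on each $H_i$ — a property that crucially uses primariness, since any factorization of a primary element necessarily stays within its component up to associates.
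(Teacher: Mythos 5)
Your proposal follows essentially the same route as the paper: the decomposition $\mathcal{I}_v^*(H)\cong\coprod_{\mathfrak p\in\mathfrak X(H)}(H_{\mathfrak p})_{\mathrm{red}}$ (you cite \cite[Theorem 2.3.11]{GHK06}, the paper invokes \cite[Proposition 5.3]{GKR15} --- the same statement), transported to the ring level through Proposition \ref{class group thm} and Remarks \ref{remarks}, with the transfer Krull/half-factorial equivalence obtained componentwise from strong primariness of the localizations and \cite[Theorem 5.5]{GSZ17}, and part (3) reduced to the decomposition via weak factoriality; your explicit justification of the coproduct-level equivalence (components strongly primary, lengths adding coordinatewise, divisor-closedness of each component), which the paper compresses into an ``in particular,'' is correct. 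The only caveat is in (2): you verify $(H\DP\widehat H)\neq\emptyset$ under the stronger assumption that $(R\DP\widehat R)$ contains a \emph{regular} element rather than merely being nonzero (the paper's own proof is silent on this sub-claim, so there is nothing to compare against), and likewise the transport of the conductor to $\mathcal{I}_v^*(R)$ is only sketched, so these two points would need either a strengthening of the hypothesis as you state it or an additional argument passing from a nonzero to a regular conductor element.
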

\begin{proof} 
    (1) By \cite[Proposition 5.3]{GKR15}, there is an isomorphism $\delta\colon\mathcal{I}_v^*(H)\to \coprod_{\mathfrak{p}\in \mathfrak{X}(H)}(H_{\mathfrak{p}})_{\mathrm{red}}$ that maps $\mathfrak{a}\in \mathcal{I}_v^*(H)$ into $\delta(\mathfrak{a})(\mathfrak{p})=a_\mathfrak{p}H_{\mathfrak{p}}^\times$ if $\mathfrak{a}_{\mathfrak{p}}=a_\mathfrak{p}H_{\mathfrak{p}}$ for some $a_\mathfrak{p}\in H_\mathfrak{p}$. Notice that the sets $\mathfrak{X}(H)$ and $\mathfrak{X}_r(R)$ are in bijection by Proposition \ref{class group thm}, and by Remarks \ref{remarks} we have that $(R_{[P]})^\bullet=(H)_{P^\bullet}$ for all $P\in \mathfrak{X}_r(R).$ 
    Consider the map  $\Delta \colon\mathcal{I}_v^*(R)\to \coprod_{P\in \mathfrak{X}_r(R)}(R_{[P]})^\bullet_{\mathrm{red}}$, that sends an invertible divisorial ideal $I$ into $\delta(I^\bullet)$. Hence $\Delta$ is an isomorphism since it is the composition of $\delta\colon \mathcal{I}_v^*(H)\to \coprod_{\mathfrak{p}\in \mathfrak{X}(H)}(H_{\mathfrak{p}})_{\mathrm{red}} $ and $\iota\colon \mathcal{I}_v^*(R)\to \mathcal{I}_v^*(H)$, and $\Delta(I)(P)=x_{P}H^\times_{P^\bullet}=x_{P}R^\times_{[P]}$ if $(I^\bullet)_{P^\bullet}=x_PH_{P^\bullet}$ for some $x_P\in H_{P^\bullet}$.
    
   Moreover, the monoids $(R_{[P]})^\bullet_{\mathrm{red}}= (H_{P^\bullet})_{\mathrm{red}}$ are Mori monoids, since $R$ is Mori by assumption and trivially primary. Therefore, they are strongly primary, whence by \cite[Theorem 5.5]{GSZ17} the monoids $(R_{[P]})^\bullet_{\mathrm{red}}$ are transfer Krull if and only if $(R_{[P]})^\bullet_{\mathrm{red}}$ are half-factorial. In particular, we get that $\coprod_{P\in \mathfrak{X}_r(R)}(R_{[P]})^\bullet_{\mathrm{red}} $ is transfer Krull if and only if it is half-factorial.

    (2) The monoid $R^\bullet$ is weakly Krull by \cite[Theorem 4.4]{CO22}. By Part (1), the monoid $\mathcal{I}_v^*(R)$ is isomorphic to $\coprod_{\mathfrak{p}\in \mathfrak{X}(H)}(H_\mathfrak{p})_{\mathrm{red}}$, and the latter is weakly Krull.
    
    (3) Since $R$ is weakly factorial Mori and $v$-Marot, then $R$ is a weakly Krull ring with the trivial class group \cite[Proposition 4.7]{CO22}. Then by part (1), $R^\bullet \cong \coprod_{P\in \mathfrak{X}_r(R)}(R_{[P]})^\bullet_{\mathrm{red}}$, and hence $R^\bullet$ is transfer Krull if and only if $R^\bullet$ is half-factorial.
\end{proof}

\section*{Acknowledgement}
We would like to express our gratitude to the anonymous referee for their valuable comments and constructive suggestions, which greatly improved the clarity and quality of this paper. In particular, we are grateful for Remark \ref{rmk: falseprop}, which helped us to properly formulate Proposition \ref{lemma:stronglyprimary}.

We are also deeply thankful to Alfred Geroldinger and Andread Reinhart for their suggestions and many helpful discussions throughout the development of this work.

\providecommand{\bysame}{\leavevmode\hbox to3em{\hrulefill}\thinspace}
\providecommand{\MR}{\relax\ifhmode\unskip\space\fi MR }
\providecommand{\MRhref}[2]{%
  \href{http://www.ams.org/mathscinet-getitem?mr=#1}{#2}
}
\bibliographystyle{alpha}

\end{document}